\newtheorem{thm}{Theorem}[section]
\newtheorem{lem}[thm]{Lemma}
\newtheorem{prop}[thm]{Proposition}
\newtheorem{cor}[thm]{Corollary}
\theoremstyle{definition}
\newtheorem{rem}[thm]{Remark}
\newtheorem{conv}[thm]{Convention}
\theoremstyle{remark}
\newtheorem*{ac}{Acknowledgments}
\newtheorem*{proof of claim}{Proof of Claim}
\numberwithin{equation}{thm}
\def\A{\mathcal{A}}
\def\assh{{\rm Assh}}
\def\Ann{\mathsf{Ann}}
\def\ann{\mathsf{ann}}
\def\C{\mathsf{C}}
\def\lc{\operatorname{\underline{\mathsf{C}}}}
\def\ca{\mathsf{ca}}
\def\cl{\mathsf{cl}}
\def\cm{\mathsf{CM}}
\def\lcm{\operatorname{\underline{\mathsf{CM}}}}
\def\cok{\operatorname{Coker}}
\def\Deep{\mathsf{Deep}}
\def\ldeep{\operatorname{\underline{\mathsf{Dee}}\mathsf{p}}}
\def\depth{\operatorname{depth}}
\def\dim{\operatorname{dim}}
\def\Ext{\operatorname{Ext}}
\def\height{\operatorname{ht}}
\def\Hom{\operatorname{Hom}}
\def\lhom{\operatorname{\underline{Hom}}}
\def\image{\operatorname{Im}}
\def\ker{\operatorname{Ker}}
\def\m{\mathfrak{m}}
\def\mod{\operatorname{mod}}
\def\lmod{\operatorname{\underline{mod}}}
\def\p{\mathfrak{p}}
\def\q{\mathfrak{q}}
\def\spec{\operatorname{Spec}}
\def\sing{\operatorname{Sing}}
\def\V{\mathrm{V}}
\def\X{\mathcal{X}}
\def\Y{\mathcal{Y}}
\begin{document}
\allowdisplaybreaks
\title{Compactness of the Alexandrov topology of maximal Cohen--Macaulay modules}
\author{Kaito Kimura}
\address{Graduate School of Mathematics, Nagoya University, Furocho, Chikusaku, Nagoya 464-8602, Japan}
\email{m21018b@math.nagoya-u.ac.jp}
\thanks{2020 {\em Mathematics Subject Classification.} 13C60; 13C14; 54A05.}
\thanks{{\em Key words and phrases.} Alexandrov topology, compact, Cohen--Macaulay, Ext module, cohomology annihilator, singular locus, countable CM-representation type.}
\thanks{The author was partly supported by Grant-in-Aid for JSPS Fellows Grant Number 23KJ1117.}

\begin{abstract}
Let $R$ be a Cohen--Macaulay local ring.
In this paper, we first describe the radicals of annihilators of stable categories of maximal Cohen--Macaulay $R$-modules.
We then prove that the Alexandrov topology of the stable category of maximal Cohen--Macaulay $R$-modules is compact provided that the completion of $R$ has an isolated singularity.
Finally, we consider the case of a hypersurface of countable CM-representation type.
\end{abstract}
\maketitle
\section{Introduction}

Throughout the present paper, all rings are assumed to be commutative and noetherian.
The main purpose of this paper is to investigate the annihilators of several stable categories and explore the relationship between their Alexandrov topologies, the cohomology annihilator, and the singular locus.
Our first main result is the theorem below.

\begin{thm}\label{main result general}
Let $(R,\m)$ be a Cohen--Macaulay local ring. Then
$$
\sqrt{\smash[b]{\Ann(\lcm(R))}}
=\sqrt{\smash[b]{\Ann(\lcm_0(R))}}
=\sqrt{\smash[b]{\ca(R)}}
=\bigcap_{\p\in\sing(\widehat{R})} (\p\cap R).
$$
\end{thm}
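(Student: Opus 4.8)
The plan is to prove the asserted four-term equality by establishing the circle of inclusions
$$
\sqrt{\smash[b]{\ca(R)}}\ \subseteq\ \sqrt{\smash[b]{\Ann(\lcm(R))}}\ \subseteq\ \sqrt{\smash[b]{\Ann(\lcm_0(R))}}\ \subseteq\ \sqrt{\smash[b]{\ca(R)}}
$$
and separately identifying the common value with $\bigcap_{\p\in\sing(\widehat R)}(\p\cap R)$. The middle inclusion is formal, since $\cm_0(R)$ is a full subcategory of $\cm(R)$ and hence $\Ann(\lcm(R))\subseteq\Ann(\lcm_0(R))$.

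For the identification I would pass to the completion. As $R\to\widehat R$ is faithfully flat and $\widehat R$ is complete — hence excellent — and Cohen--Macaulay — hence equidimensional and admitting a canonical module — the singular locus $\sing(\widehat R)$ is closed and, by the theorem of Iyengar--Takahashi, $\V(\ca(\widehat R))=\sing(\widehat R)$; that is, $\sqrt{\ca(\widehat R)}=\bigcap_{\p\in\sing(\widehat R)}\p$. Since the $n$-th cohomology annihilator can be computed from finitely many modules and this computation is compatible with flat base change, $\ca(R)$ and $\ca(\widehat R)$ have matching radicals, and, using that $\sqrt{J\widehat R}\cap R=\sqrt J$ for every ideal $J$ of $R$, one gets $\sqrt{\ca(R)}=\sqrt{\ca(\widehat R)}\cap R=\bigcap_{\p\in\sing(\widehat R)}(\p\cap R)$. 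Since completion moreover carries maximal Cohen--Macaulay modules, and those among them free on the punctured spectrum, to modules of the same kind over $\widehat R$ and commutes with the formation of $\lhom$, after this reduction the two remaining inclusions may be proved under the extra hypothesis that $R$ is complete with canonical module, so that maximal Cohen--Macaulay approximations are available.

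For $\sqrt{\ca(R)}\subseteq\sqrt{\Ann(\lcm(R))}$ I would run a syzygy computation. Over a Cohen--Macaulay ring every $d$-th syzygy ($d=\dim R$) is maximal Cohen--Macaulay, and for maximal Cohen--Macaulay $M,N$ the exact sequence $0\to\Omega N\to F\to N\to 0$ yields a natural monomorphism $\lhom_R(M,N)\hookrightarrow\Ext^1_R(M,\Omega N)$. Iterating along the minimal free resolution of $N$ expresses $\Ext^1_R(M,\Omega N)$, through finitely many extension steps, as built from a subquotient of $\Ext^{n}_R(M,\Omega^{n}N)$ and from subquotients of copies of the modules $\Ext^{i}_R(M,R)$ with $i<n$, and the latter can themselves be pushed into arbitrarily high degree by replacing $M$ with a sufficiently deep syzygy; hence a fixed power of any $r\in\ca^{n}(R)$ annihilates $\lhom_R(M,N)$, and since $\ca(R)=\bigcup_n\ca^{n}(R)$ the inclusion follows. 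By the previous paragraph, the remaining inclusion $\sqrt{\Ann(\lcm_0(R))}\subseteq\sqrt{\ca(R)}$ amounts to $\sqrt{\Ann(\lcm_0(R))}\subseteq\bigcap_{\p\in\sing(\widehat R)}(\p\cap R)$, and I would prove it over the complete ring by exhibiting, for each $\p\in\sing(\widehat R)$ and each $r\notin\p$, modules in $\cm_0$ whose stable homomorphism modules are not simultaneously killed by a power of $r$ — natural candidates being high syzygies of cyclic modules $\widehat R/\mathfrak a$ with $\mathfrak a\subseteq\p$ of finite projective dimension on the punctured spectrum but not globally, chosen so that the relevant cohomology persists after localizing at $\p$ — and then descending back to $R$.

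The main obstacle is this last inclusion. Every object of $\cm_0(R)$ is locally free away from $\m$ on $\spec R$, so each stable homomorphism module involved has finite length and $\m$-primary annihilator; yet the intersection of all of these annihilators must equal, up to radical, the ideal $\bigcap_{\p\in\sing(\widehat R)}(\p\cap R)$, which need not be $\m$-primary. One must therefore produce enough modules in $\cm_0$ to detect every point of $\sing(\widehat R)$, including the points over the closed point that materialise only after completion, and this is exactly what forces the descent to $\widehat R$ and a delicate choice of test modules. A subsidiary point requiring care is the bookkeeping with the $\Ext_R(-,R)$-terms in the proof of the first inclusion, which vanish over Gorenstein rings but not in general.
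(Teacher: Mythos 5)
Your overall architecture (a cycle of inclusions plus an identification of $\sqrt{\smash[b]{\ca(R)}}$ with $\bigcap_{\p\in\sing(\widehat{R})}(\p\cap R)$ via the completion and Iyengar--Takahashi) is reasonable, and the easy steps you cite are correct: $\Ann(\lcm(R))\subseteq\Ann(\lcm_0(R))$ is formal, and $\Ann(\lcm(R))\subseteq\ca(R)$ follows from Lemma \ref{DeyT3.8} by taking $d$-th syzygies. But the two hard legs are not actually proved. First, for $\sqrt{\smash[b]{\ca(R)}}\subseteq\sqrt{\smash[b]{\Ann(\lcm(R))}}$ your dimension-shifting scheme must kill the obstruction terms $\Ext_R^i(M,R)$, $1\le i<n$, for the \emph{given} maximal Cohen--Macaulay module $M$, and your fix --- ``replace $M$ by a sufficiently deep syzygy'' --- changes the module whose stable endomorphisms you need to annihilate. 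Over a non-Gorenstein Cohen--Macaulay ring $\Omega$ is not an equivalence on $\lcm(R)$ and a maximal Cohen--Macaulay module need not be a (high) syzygy at all (over an artinian non-Gorenstein ring every module is maximal Cohen--Macaulay, but only submodules of free modules are syzygies), so your argument only yields that $\ca(R)$ annihilates stable endomorphisms of high syzygies, which is essentially the remark after Lemma \ref{DeyT3.8} and falls short of $\Ann(\lcm(R))$. This is not a ``subsidiary point'': the paper needs a genuinely different ingredient here, namely the trace ideal of the canonical module via \cite[Theorem 2.3]{DKT} together with \cite[Lemma 4.1(2)]{DeyT} (which gives $(\operatorname{tr}\omega)^{n-1}\cdot\ann_R\Ext_R^n(M,\Omega^nM)\subseteq\ann_R\Ext_R^1(M,\Omega M)$), and a passage to $\widehat{R}$ when no canonical module exists. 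Second, for $\sqrt{\smash[b]{\Ann(\lcm_0(R))}}\subseteq\sqrt{\smash[b]{\ca(R)}}$ you offer only ``natural candidates'' for test modules and yourself identify this as the main obstacle; no argument is given. The paper does not detect each $\p\in\sing(\widehat{R})$ by test modules in $\cm_0(R)$ at all: instead Proposition \ref{main lemma of general}(2) proves $\sqrt{\smash[b]{\Ann(\lcm_0(R))}}=\sqrt{\smash[b]{\Ann(\lcm(R))}}$ directly over $R$, by a descending induction on the height of the nonfree locus (choosing $x$ outside the finitely many bad primes, passing to $\Omega(M/x^iM)$, and using Krull's intersection theorem), after which the easy inclusion $\Ann(\lcm(R))\subseteq\ca^{d+1}(R)$ closes the cycle.

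There is also a gap in your identification step: $\ca^n(R)$ is not computed from finitely many modules, and since finitely generated $\widehat{R}$-modules need not descend to $R$, ``compatibility with flat base change'' only gives $\ca(\widehat{R})\cap R\subseteq\ca(R)$ (this is \cite[Theorem 4.5]{BHST}); in particular $\ca(\widehat{R})$ is not known to be the extension of $\ca(R)$, so $\sqrt{\smash[b]{J\widehat{R}}}\cap R=\sqrt{\smash[b]{J}}$ does not apply. The reverse containment up to radical is exactly the delicate point, which the paper handles by testing against finite-length $\widehat{R}$-modules (these are already $R$-modules with the same Ext) and then invoking \cite[Corollary 4.4(2)]{DaoT} to pass from finite-length test modules to all finitely generated $\widehat{R}$-modules at the cost of the power $2^{2d}$. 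In short, the skeleton of your plan matches the shape of the theorem, but each of the three nontrivial steps is missing the specific mechanism (trace ideal of the canonical module; induction on the nonfree locus; finite-length descent plus the Dao--Takahashi lemma) that makes the paper's proof work.
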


\noindent
Here, we denote by $\cm(R)$ the category of maximal Cohen--Macaulay $R$-modules, and by $\cm_0(R)$ the category of maximal Cohen--Macaulay $R$-modules that are locally free on the punctured spectrum. 
For $\X\in\{\cm(R), \cm_0(R)\}$ we denote by $\Ann(\underline\X)$ the {\em annihilator} of $\underline\X$, which is by definition the ideal of $R$ consisting of elements $a$ such that the multiplication map by $a$ of each module in $\X$ factors through a free module. 
Also, $\ca(R)$ stands for the {\em cohomology annihilator} of $R$ in the sense of Iyengar and Takahashi \cite{IT}, which is defined as the ideal consisting of elements $a$ such that there exists an integer $n$ with $a\Ext_R^n(M,N)=0$ for all finitely generated modules $M,N$. 
Even when $R$ is not Cohen--Macaulay, considering suitable modules corresponding to maximal Cohen--Macaulay modules makes the first and third equalities hold; see Proposition \ref{main lemma of general}.
On the other hand, the Cohen--Macaulayness is required for the second equality.
In fact, under a few assumptions, the converse of this proposition holds; see Corollary \ref{CMiff}.
Our second main result is an application of the above theorem.

\begin{thm}\label{main cor of TFAE}
Let $(R,\m)$ be a non-regular Cohen--Macaulay local ring. The following are equivalent: 
\begin{enumerate}[\rm(1)]
\item The Alexandrov space $\A(\lcm_0(R))$ is compact;
\item The dimension of $\cm_0(R)$ is finite;
\item The cohomology annihilator $\ca(R)$ is $\m$-primary;
\item The completion $\widehat{R}$ has an isolated singularity.
\end{enumerate}
Furthermore, $\A(\lcm(R))$ is compact if one of these conditions holds.
\end{thm}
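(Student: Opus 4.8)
The plan is to deduce $(3)\Leftrightarrow(4)$ and $(1)\Leftrightarrow(2)$ outright, then close the loop with $(2)\Rightarrow(3)$ and $(4)\Rightarrow(2)$, and finally read off the last assertion; the principal input throughout is Theorem~\ref{main result general}. For $(3)\Leftrightarrow(4)$: by Theorem~\ref{main result general} we have $\sqrt{\ca(R)}=\bigcap_{\p\in\sing(\widehat R)}(\p\cap R)$, and $\ca(R)\ne R$ since $R$ is not regular, so $\ca(R)$ is $\m$-primary exactly when this intersection is $\m$. If $\widehat R$ has an isolated singularity, then $\sing(\widehat R)=\{\widehat\m\}$ --- nonempty because $R$, hence $\widehat R$, is not regular --- and $\widehat\m\cap R=\m$. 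Conversely, if some $\p\in\sing(\widehat R)$ differs from $\widehat\m$, then $\m\widehat R\not\subseteq\p$ because $\sqrt{\m\widehat R}=\widehat\m$, so $\p\cap R$ is a non-maximal prime and the intersection is properly contained in $\m$; thus $(3)$ fails. This proves $(3)\Leftrightarrow(4)$.

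For $(1)\Leftrightarrow(2)$ I would use the general properties of the Alexandrov space: compactness of $\A(\lcm_0(R))$ is equivalent to $\cm_0(R)$ being generated from a single module by a bounded number of operations of taking extensions, syzygies and direct summands, which is precisely the finiteness of $\dim\cm_0(R)$. For $(2)\Rightarrow(3)$ I would combine this with the annihilator calculus and Theorem~\ref{main result general}. Finiteness of $\dim\cm_0(R)$ gives $G\in\cm_0(R)$ and an integer $n$ such that every object of $\cm_0(R)$ is built from $G$ in at most $n$ such steps; iterating the estimate $\Ann(\underline{X'})\cdot\Ann(\underline{X''})\subseteq\Ann(\underline X)$ for a short exact sequence $0\to X'\to X\to X''\to 0$, together with its analogues for syzygies and summands, yields a fixed power of $\Ann(\underline G)$ contained in $\Ann(\lcm_0(R))$. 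Since $G$ generates, its non-free locus contains that of a sufficiently high syzygy of the residue field, which is $\{\m\}$, and is contained in $\{\m\}$ because $G\in\cm_0(R)$; hence $\Ann(\underline G)$ is $\m$-primary, so is $\Ann(\lcm_0(R))$, and Theorem~\ref{main result general} turns this into the $\m$-primariness of $\ca(R)$.

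It remains to carry out $(4)\Rightarrow(2)$. First, $\widehat R$ having an isolated singularity forces $R$ to have one: for a non-maximal prime $\p$ of $R$, pick a prime $\widehat\p$ of $\widehat R$ lying over $\p$ (faithful flatness); then $\widehat\p\ne\widehat\m$, so $\widehat R_{\widehat\p}$ is regular, and faithful flatness of $R_\p\to\widehat R_{\widehat\p}$ makes $R_\p$ regular. Hence localizations of $R$ at non-maximal primes are regular, so every maximal Cohen--Macaulay $R$-module is free on the punctured spectrum by the Auslander--Buchsbaum formula, and $\cm(R)=\cm_0(R)$. On the other hand $(4)$ gives, via $(3)\Leftrightarrow(4)$ and Theorem~\ref{main result general}, that $\Ann(\lcm_0(R))$ is $\m$-primary; by the strong-generation results of Iyengar--Takahashi \cite{IT}, this uniform annihilation of all sufficiently high Ext-modules forces $\cm_0(R)=\cm(R)$ to have finite dimension, which is $(2)$. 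Finally, if any of $(1)$--$(4)$ holds then so does $(4)$, hence $\cm(R)=\cm_0(R)$ as above and $\A(\lcm(R))=\A(\lcm_0(R))$, which is compact by $(1)$; this is the last assertion.

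The step I expect to be the main obstacle is $(4)\Rightarrow(2)$ (equivalently $(3)\Rightarrow(2)$): one must pass from the \emph{uniform} annihilation of higher Ext-modules --- all that $\m$-primariness of $\ca(R)$ directly provides --- to the stronger-looking statement that $\cm_0(R)$ is generated from one module in boundedly many steps, i.e. has finite dimension. Controlling each module individually (every object of $\cm_0(R)$ is locally free away from $\m$ and so killed by some power of $\m$) does not suffice, since the power and the accompanying filtration must be uniform; the full strong-generation machinery is needed here. The reverse passages amount to bookkeeping annihilators through finite filtrations, which is routine once Theorem~\ref{main result general} --- pinning $\sqrt{\ca(R)}$ to the radical of $\Ann(\lcm_0(R))$ --- is in hand.
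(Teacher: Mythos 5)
Your outline reproduces the easy parts of the paper's argument (the $(3)\Leftrightarrow(4)$ step via Theorem \ref{main result general}, and the final assertion via $\cm(R)=\cm_0(R)$ when $\widehat R$ has an isolated singularity), but it has a genuine gap at its pivot: you declare $(1)\Leftrightarrow(2)$ to be a ``general property of the Alexandrov space.'' It is not. The topology on $\A(\lcm_0(R))$ is defined by the preorder $\Ann(X)\subseteq\Ann(Y)$, and the only general compactness criterion available (\cite[Theorem 2.6]{AACE}) is that $\A(\lcm_0(R))$ is compact if and only if there exists a single $M\in\lcm_0(R)$ with $\Ann(M)=\Ann(\lcm_0(R))$; nothing in the formalism relates compactness to generation of $\cm_0(R)$ in boundedly many steps. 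Identifying the two is essentially the theorem itself, and the paper gets it from two ingredients you omit: (a) the characterization $\dim\cm_0(R)<\infty \Leftrightarrow \Ann(\lcm_0(R))$ is $\m$-primary, quoted from Lemma \ref{DeyT3.8} and \cite[Theorem 1.1]{DaoT} (your annihilator bookkeeping only sketches the forward direction $(2)\Rightarrow \m$-primariness), and (b) the implication ``$\Ann(\lcm_0(R))$ $\m$-primary $\Rightarrow$ a minimum element exists,'' which the paper proves in Theorem \ref{main TFAE} by a descending-chain argument: starting from any $M_0$, adjoin $N_i$ whenever $\Ann(M_i)\nsubseteq\Ann(N_i)$, use $\Ann(M_i\oplus N_i)=\Ann(M_i)\cap\Ann(N_i)$, and stop because $R/\Ann(\lcm_0(R))$ is artinian. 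That argument appears nowhere in your proposal, and without it your loop of implications does not close.

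The second gap is the one you flag yourself, $(4)\Rightarrow(2)$, and the fix you gesture at is not the right tool: the strong-generation results of Iyengar--Takahashi concern generation of $\mod(R)$ and the cohomology annihilator, not the Dao--Takahashi dimension of the subcategory $\cm_0(R)$; passing from a strong generator of $\mod(R)$ to an object $G\in\cm_0(R)$ with $\cm_0(R)=[G]_{n+1}$ is precisely the missing (and nontrivial) work. The paper avoids doing it by again invoking \cite[Theorem 1.1]{DaoT} for ``$(2)\Leftrightarrow\Ann(\lcm_0(R))$ is $\m$-primary'' and then letting Theorem \ref{main result general} (through Theorem \ref{main TFAE}) identify the radicals of $\Ann(\lcm_0(R))$, $\Ann(\lcm(R))$ and $\ca(R)$ with $\bigcap_{\p\in\sing(\widehat R)}(\p\cap R)$, so that $\m$-primariness of any of them is equivalent to $\widehat R$ having an isolated singularity. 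To repair your proposal, replace the ``general property'' claim by \cite[Theorem 2.6]{AACE} plus the descending-chain argument above, and replace the appeal to \cite{IT} by the Dao--Takahashi characterization (or carry out the generation argument for $\cm_0(R)$ in full, which is a substantial undertaking); with those substitutions your skeleton becomes the paper's proof.
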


We explain our results including the above displayed theorems, relating them with various results in the literature.
Let $R$ be a local ring.
Iyengar and Takahashi \cite{IT} proved that $\ca(R)$ is a defining ideal of $\sing(R)$ if the category $\mod(R)$ of finitely generated $R$-modules has a strong generator, and that $\mod(R)$ has a strong generator if $R$ is an equicharacteristic excellent ring or a localization of a finitely generated algebra over a field.
Recently, Dey, Lank, and Takahashi \cite{DLT} showed that if $R$ is quasi-excellent, then $\mod(R)$ has a strong generator.
We show in Proposition \ref{main lemma of general} that $\ca(R)$ is a defining ideal of $\sing(R)$ if all the formal fibers of $R$ are regular, and confirm in Remark \ref{non reg non trivial}(1) that it does not hold in general.
Dao and Takahashi \cite{DaoT} proved that if $R$ is Cohen--Macaulay and the dimension of $\cm_0(R)$ is finite, then $R$ has an isolated singularity, and that the converse holds true if $R$ is complete, equicharacteristic, and with perfect residue field.
Dey and Takahashi \cite{DeyT} showed the converse when $R$ is excellent, equicharacteristic, and admits a canonical module.
Theorem \ref{main cor of TFAE} partly refines these results since $R$ has an isolated singularity if and only if so does $\widehat{R}$ when all the formal fibers of $R$ are regular.
The equivalence of (2) and (3) in Theorem \ref{main cor of TFAE} is a direct corollary of Theorem \ref{main result general}, which is already known in \cite{DeyT}.

Esentepe \cite{E} showed that the annihilator of $\lcm(R)$ is equal to $\ca(R)$ if $R$ is Gorenstein.
For any Cohen--Macaulay complete equicharacteristic local ring $R$ with perfect residue field, it is shown in \cite{DaoT, DeyT} that the annihilators of $\lcm(R)$ and $\lcm_0(R)$ are defining ideals of  $\sing(R)$.
Theorem \ref{main result general} asserts that the radicals of $\ca(R)$ and the annihilators of $\lcm(R)$ and $\lcm_0(R)$ are always equal.
As mentioned above, they are defining ideals of $\sing(R)$ if all the formal fibers of $R$ are regular.
For a Gorenstein local ring $R$, Akdenizli, Aytekin, \c{C}etin, and Esentepe studied the Alexandrov space $\A(\lcm(R))$ of $\lcm(R)$ and provided a necessary and sufficient condition for $\A(\lcm(R))$ to be compact, in terms of the annihilator of $\lcm(R)$.
Applying the results of Esentepe \cite{E}, they showed that $\A(\lcm(R))$ is compact if $R$ is a complete reduced Gorenstein local ring of dimension one.
Theorem \ref{main cor of TFAE} highly improves this result.

Some of the above results essentially require the (quasi-)excellence of the ring.
It is worth mentioning that it is possible to compare certain annihilators of categories without imposing (quasi-)excellence by completion, as stated in Theorem \ref{main result general}.
The investigation of the compactness of the Alexandrov space in connection with other notions appearing in Theorem \ref{main cor of TFAE} is an original idea of this paper.

It is easy to see that the Alexandrov space is compact in the case of finite CM-representation type.
On the other hand, the compactness in the case of countable CM-representation type is not generally known.
Our third main result addresses this case.

\begin{thm}\label{main example countable}
Let $R$ be a complete local hypersurface with uncountable algebraically closed coefficient field of characteristic different from 2.
If $R$ has countable CM-representation type, $\A(\lcm(R))$ is compact.
\end{thm}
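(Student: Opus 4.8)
The plan is to combine the classification of hypersurface singularities of countable Cohen--Macaulay representation type with an explicit computation of stable annihilators via matrix factorizations. First I would record the relevant characterization of compactness: $\A(\lcm(R))$ is compact if and only if there are finitely many $M_1,\dots,M_r\in\cm(R)$ such that every $N\in\cm(R)$ satisfies $\Ann(\underline{M_i})\subseteq\Ann(\underline{N})$ for some $i$. In particular it suffices to produce a single module $G\in\cm(R)$ whose stable annihilator equals $\Ann(\lcm(R))$, the annihilator of the whole category; then the minimal open neighbourhood of $G$ is already all of $\A(\lcm(R))$, and compactness is immediate. So the goal becomes: find such a $G$.

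Second, since $R$ is a complete hypersurface whose coefficient field $k$ is uncountable, algebraically closed, and of characteristic $\ne2$, the Buchweitz--Greuel--Schreyer classification of hypersurface singularities of countable CM-representation type applies. Either $R$ has \emph{finite} CM-representation type, in which case $R=\widehat{R}$ has an isolated singularity and the conclusion is already contained in Theorem~\ref{main cor of TFAE}; or, up to a change of coordinates, $R\cong k[[x_0,\dots,x_d]]/(f)$ with $f$ of type $A_\infty$, say $f=x_0^2+x_2^2+\dots+x_d^2$, or of type $D_\infty$, say $f=x_0^2x_1+x_2^2+\dots+x_d^2$. In both of the latter cases $\sing(R)$ is the line $\V(\p)$ with $\p=(x_0,x_2,\dots,x_d)$, so $\dim R/\p=1$, and Theorem~\ref{main result general} (with $\widehat{R}=R$) gives $\sqrt{\smash[b]{\Ann(\lcm(R))}}=\sqrt{\smash[b]{\ca(R)}}=\p$.

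Third --- the substance of the argument --- I would exhibit $G$. I expect to use Knörrer periodicity to reduce to the base cases $d=1$ (namely $k[[x,y]]/(x^2)$ and $k[[x,y]]/(x^2y)$) and $d=2$ (namely $k[[u,v,w]]/(uv)$ and $k[[x,y,z]]/(x^2y+z^2)$), after checking that Knörrer's triangle equivalence carries a module realizing the category's annihilator to another such module; alternatively one may run the computation below directly on the ``suspended'' matrix factorizations in all dimensions. For the base case $A_\infty=k[[x,y]]/(x^2)$, the indecomposable maximal Cohen--Macaulay modules are $R$, $R/(x)$, and the ideals $(x,y^n)$ for $n\ge1$; applying the standard criterion that multiplication by $a$ on $\cok\phi$ is stably trivial precisely when $aI=\phi A+B\psi$ for some matrices $A,B$ (where $(\phi,\psi)$ is the matrix factorization of $f$), one computes $\Ann(\underline{R/(x)})=(x)$ and $\Ann(\underline{(x,y^n)})=(x,y^n)$. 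Hence the poset of all stable annihilators is the chain $R\supsetneq(x,y)\supsetneq(x,y^2)\supsetneq\cdots$ together with its least element $(x)=\bigcap_n(x,y^n)=\Ann(\underline{R/(x)})$, so $G=R/(x)=R/\p$ works. The analogous computation handles the other base cases, with $G=R/(x)$ for $D_\infty$ in dimension one and $G=R/(x)$ (or the appropriate first syzygy $\Omega(R/\p)$, since $R/\p$ is no longer maximal Cohen--Macaulay) in the dimension-two cases.

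I expect the main obstacle to lie in this third step, in two respects. First, transferring compactness of the Alexandrov space along Knörrer periodicity is not formal, because the relevant annihilators live in different rings; one has to track how Knörrer's functor interacts with multiplication by the variables of the base ring, or else perform the matrix-factorization computation directly in arbitrary dimension, at the cost of messier linear algebra over the doubled-up matrices. Second, the $D_\infty$ case is genuinely more delicate than $A_\infty$: the ring is non-reduced, the indecomposables fall into two one-parameter families, and identifying the generator requires pinning down $\Ann(\underline{N})$ for each $N$ and verifying that the intersection of these ideals is \emph{realized} by one explicit module rather than merely attained as a limit of a descending chain.
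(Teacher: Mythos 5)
Your plan follows the same route as the paper: the Buchweitz--Greuel--Schreyer classification, the criterion of \cite[Theorem 2.6]{AACE} that compactness of $\A(\lcm(R))$ amounts to a single module realizing $\Ann(\lcm(R))$, explicit matrix-factorization computations of stable annihilators in low dimension, and a Kn\"{o}rrer-type reduction. However, one of your concrete steps fails. For $D_\infty$ in dimension one, $R=k\llbracket x,y\rrbracket/(x^2y)$, the candidate $G=R/(x)$ does not realize the minimum of the preorder: one computes $\Ann(R/xR)=xR$ while $\Ann(R/yR)=(x^2,y)R$ (and in fact $\Ann(\cok\gamma_n)=(x^2,xy,y^{n+1})R$; see Remark \ref{Strict equality sign}), and since $x\notin(x^2,y)R$ one has $\Ann(R/xR)\nsubseteq\Ann(R/yR)$, so $R/xR$ is not a minimum and its ``minimal open neighbourhood'' is not all of $\A(\lcm(R))$. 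The minimum is instead realized by the decomposable module $R/xR\oplus R/yR$, whose annihilator is $xR\cap(x^2,y)R=(x^2,xy)R=\Ann(\lcm(R))$; this is exactly Proposition \ref{D type is compact}(1). (Your $A_\infty$ computation agrees with Proposition \ref{A type is compact}(1).) Your dimension-two candidates are also shaky --- $R/(x)$ is not even maximal Cohen--Macaulay over $k\llbracket x,y,z\rrbracket/(x^2+z^2)$ or $k\llbracket x,y,z\rrbracket/(x^2y+z^2)$ --- though this is harmless for the strategy, since the reduction below makes dimension two unnecessary.

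The second gap is the transfer step, which you correctly identify as the main obstacle but leave unresolved. The paper closes it not with Kn\"{o}rrer's two-variable equivalence but with the one-variable construction $R^{\#}=S\llbracket z\rrbracket/(f+z^2)$: if $M$ realizes $\Ann(\lcm(R))$, i.e.\ lies in every closed subset of $\A(\lcm(R))$, then by (the proof of) \cite[Proposition 5.4]{AACE} the syzygy $\Omega_{R^{\#}}M$ lies in every closed subset of $\A(\lcm(R^{\#}))$, hence $\Ann(\lcm(R^{\#}))=\Ann(\Omega_{R^{\#}}M)$, and \cite[Theorem 2.6]{AACE} gives compactness of $\A(\lcm(R^{\#}))$; iterating one variable at a time starting from the one-dimensional computations covers all dimensions. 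Without such a statement tracking how the functor interacts with the annihilators (or the ``direct computation in all dimensions'' that you mention but do not carry out), your reduction to the base cases remains unproved. With the corrected choice of $G$ in the $D_\infty$ case and this transfer lemma supplied, your argument becomes the paper's proof.
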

\noindent
This theorem is proved by calculating the annihilators of the endomorphism rings of all objects in $\lcm(R)$.
The ring $R$ appearing in Theorem \ref{main example countable} does not have an isolated singularity.
According to Theorem \ref{main cor of TFAE}, $\A(\lcm_0(R))$ is not compact.
This fact can be directly confirmed by calculation.

The organization of this paper is as follows. 
In section 2, we give the relationship between annihilators of some stable categories and prove Theorems \ref{main result general} and \ref{main cor of TFAE}.
In section 3, we study the Alexandrov topology over a complete local hypersurface with uncountable algebraically closed coefficient field of characteristic different from 2 and show Theorem \ref{main example countable}.

\section{Annihilators of stable categories}

In this section, we consider the annihilator ideals of several stable categories. 
We see that those ideals characterize the singular locus and the Cohen--Macaulayness of a local ring.
First of all, we state the definitions of notions used in this paper.

\begin{conv}\label{convention annihilator}
Throughout this paper, we assume the following.
Let $R$ be a local ring.

{\rm (1)} Denote by $\mod(R)$ the category of finitely generated $R$-modules.
For an $R$-module $M$, the annihilator ideal of $M$ is denoted by $\ann_R(M)$.
Following Iyengar and Takahashi \cite{IT}, we consider the following ideals for any integer $n\ge 0$:
$$
\ca^n(R):=\bigcap_{m\ge n, M,N\in\mod(R)} \ann_R(\Ext_R^m(M,N)) \quad {\rm and} \quad \ca(R):=\bigcup_{m\ge 0}\ca^m(R).
$$
The ideal $\ca(R)$ is called the \textit{cohomology annihilator} of $R$.
Since $R$ is noetherian, the ascending chain $\ca^0(R)\subseteq\ca^1(R)\subseteq\ca^2(R)\subseteq\cdots$ of ideals of $R$ is eventually stable.
Hence $\ca(R)=\ca^n(R)$ for some $n$.

{\rm (2)} The \textit{singular locus} $\sing(R)$ of $R$ is defined as the set of prime ideals of $R$ such that $R_\p$ is not a regular local ring.
We say that $R$ has an \textit{isolated singularity} if any non-maximal prime ideal of $R$ is not in $\sing(R)$.
For each ideal $I$ of $R$, the set of prime ideals of $R$ containing $I$ is denoted by $\V(I)$.
Let $M$ be a finitely generated $R$-module and $F=(\cdots\to F_1\xrightarrow{\alpha}F_0\to0)$ a minimal free resolution of $M$.
The {\em syzygy} $\Omega_R M$ of $M$ is defined as $\operatorname{Im} \alpha$.
We put $\Omega_R^0M=M$, and the {\em $n$th syzygy} $\Omega_R^n M$ is defined inductively by $\Omega_R^n M=\Omega_R(\Omega_R^{n-1}M)$.
The completions of $R$ and $M$ are denoted by $\widehat{R}$ and $\widehat{M}$, respectively.

{\rm (3)} An $R$-module $M$ is called \textit{maximal Cohen--Macaulay} if $\depth M_\p\ge \dim R_\p$ for all prime ideals $\p$ of $R$.
The category of maximal Cohen--Macaulay $R$-modules is denoted by $\cm(R)$.
Respectively, following Takahashi \cite{Ta} and Dao, Eghbali, and Lyle \cite{DEL}, we denote by $\C(R)$ the subcategory of $\mod(R)$ consisting of modules $M$ such that $\depth M_\p\ge \depth R_\p$ for all prime ideals $\p$ of $R$ and by $\Deep(R)$ the subcategory of $\mod(R)$ consisting of modules $M$ such that $\depth M \ge \depth R$.
An $R$-module $M$ is \textit{locally free on the punctured spectrum} if $M_\p$ is a free $R_\p$-module for any non-maximal prime ideal $\p$ of $R$.
For a subcategory $\X$ of $\mod(R)$, denote by $\X_0$ the subcategory of $\X$ consisting of modules which are locally free on the punctured spectrum. 
Note that $\C_0(R)=\Deep_0(R)$ and that if $R$ is Cohen--Macaulay, then $\cm(R)=\C(R)=\Deep(R)$.

{\rm (4)}
For $R$-modules $M,N$, denote by $\mathcal{P}_R(M,N)$ the set of $R$-homomorphisms from $M$ to $N$ factoring through projective modules, which an $R$-submodule of $\Hom_R(M,N)$.
Let us denote by $\lhom_R(M,N)$ the quotient $R$-module $\Hom_R(M,N)/\mathcal{P}_R(M,N)$.
For a subcategory $\X$ of $\mod(R)$, the \textit{stable category} $\underline\X$ of $\X$ is defined as the category whose objects are the same as $\X$ and whose morphism sets are $\lhom_R(M,N)$.

{\rm (5)} Let $S$ be a set.
A \textit{preorder} or a \textit{quasiorder} on $S$ is a binary relation $\le$ on $S$ that is reflexive and transitive.
The \textit{closure} $\cl(x)$ of a point $x\in S$ is a subset $\{y\in S\mid y\le x\}$ of $S$, and the closure $\cl(U)$ of a subset $U$ of $S$ is the union $\bigcup_{x\in U}\cl(x)$.
There is a topology on $S$ for which $\{\cl(U)\mid U \subseteq S\}$ is the set of closed sets.
This topology is known as \textit{Alexandrov topology} associated to $\le$.
Let $\X$ be an $R$-linear category (having a small skeleton). 
For any object $X\in\X$, we consider the following ideals:
$$
\Ann(X):=\ann_R \Hom_\X(X,X) \quad {\rm and} \quad \Ann(\X):=\bigcap_{X\in\X} \Ann(X).
$$
The inclusion relation $\Ann(X)\subseteq\Ann(Y)$ for $X,Y\in\X$ defines a preorder on the set of isomorphism classes of objects of $\X$.
The corresponding Alexandrov space is denoted by $\A(\X)$; see \cite{AACE}.
Note the distinction between uppercase $\Ann$ and lowercase $\ann$.
In this paper, quasi-compact is simply called compact, that is, we say that a topological space $T$ is compact if every open cover has a finite subcover.

{\rm (6)} All subcategories are nonempty, full, and closed under isomorphism.
Let $\X, \Y$ be subcategories of $\mod(R)$.
Denote by $[\X]$ the smallest subcategory of $\mod(R)$ containing $\{R\}\cup\X$ that is closed under direct summands, finite direct sums, and syzygies.
We denote by $\X\circ\Y$ the subcategory of $\mod(R)$ consisting of objects $Z$ such that there exists an exact sequence $0\to X\to Z\to Y\to 0$ in $\mod(R)$ with $X\in\X$ and $Y\in\Y$.
We set $[\X]_1=[\X]$, and the \textit{ball of radius} $n$ centered at $\X$ is defined inductively by $[\X]_n=[[\X]_{n-1}\circ[\X]]$.
Following Dao and Takahashi \cite{DaoT}, we define the \textit{dimension} of $\X$, denoted by $\dim(\X)$, as the infimum of $n\ge 0$ such that $\X=[G]_{n+1}$ for some object $G\in\X$.
\end{conv}

We prepare two lemmas that play crucial roles in the proof of the main results of this section.

\begin{lem}\cite[Lemma 3.8]{DeyT}\label{DeyT3.8}
Let $R$ be a local ring, and $M$ an $R$-module. Then
$$
\bigcap_{m>0, N\in\mod(R)} \ann_R(\Ext_R^m(M,N))=\ann_R(\Ext_R^1(M,\Omega M))=\ann_R(\lhom_R(M,M)).
$$
\end{lem}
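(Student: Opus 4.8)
The plan is to prove the cyclic chain of inclusions $I_1\subseteq I_2\subseteq I_3\subseteq I_1$, where $I_1$, $I_2$, $I_3$ denote the three ideals appearing in the statement, read from left to right; once this is in place all three coincide. The inclusion $I_1\subseteq I_2$ is immediate: taking $m=1$ and $N=\Omega M$ shows that $\Ext_R^1(M,\Omega M)$ is one of the modules occurring in the intersection that defines $I_1$.

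For $I_2\subseteq I_3$, I would take the standard short exact sequence $0\to\Omega M\xrightarrow{\iota}F\xrightarrow{\pi}M\to 0$ with $F$ free and apply $\Hom_R(M,-)$, obtaining an exact sequence $\Hom_R(M,F)\xrightarrow{\pi_*}\Hom_R(M,M)\xrightarrow{\delta}\Ext_R^1(M,\Omega M)$. The point to verify is that $\image\pi_*$ is exactly $\mathcal{P}_R(M,M)$: the inclusion $\image\pi_*\subseteq\mathcal{P}_R(M,M)$ is clear since $F$ is free, and conversely any homomorphism $M\xrightarrow{g}P\xrightarrow{h}M$ that factors through a projective $P$ can be rerouted through $F$, because $h$ lifts along the surjection $\pi$ to some $\widetilde{h}\colon P\to F$, giving $hg=\pi\widetilde{h}g\in\image\pi_*$. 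Consequently $\delta$ induces an $R$-linear monomorphism $\lhom_R(M,M)=\Hom_R(M,M)/\mathcal{P}_R(M,M)\hookrightarrow\Ext_R^1(M,\Omega M)$, whence $I_2=\ann_R\Ext_R^1(M,\Omega M)\subseteq\ann_R\lhom_R(M,M)=I_3$.

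The main inclusion is $I_3\subseteq I_1$. Let $a\in I_3$; then $a\cdot\mathrm{id}_M$ vanishes in $\lhom_R(M,M)$, so it factors as $M\xrightarrow{g}P\xrightarrow{h}M$ through a projective module $P$. For every integer $m>0$ and every $N\in\mod(R)$, applying the contravariant functor $\Ext_R^m(-,N)$ and using its $R$-linearity gives
$$
a\cdot\mathrm{id}_{\Ext_R^m(M,N)}=\Ext_R^m(a\cdot\mathrm{id}_M,N)=\Ext_R^m(g,N)\circ\Ext_R^m(h,N),
$$
which factors through $\Ext_R^m(P,N)=0$; hence $a\,\Ext_R^m(M,N)=0$. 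Since $m$ and $N$ were arbitrary, $a\in I_1$, completing the cycle.

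I do not expect a genuine obstacle here: the argument is essentially formal once the right short exact sequence is in play. The only spot needing a little care is the identification $\image\pi_*=\mathcal{P}_R(M,M)$, which hinges on the lifting property of the projective $P$ against the surjection $\pi\colon F\to M$ and, in effect, reconciles ``factors through the chosen free module $F$'' with ``factors through an arbitrary projective.''
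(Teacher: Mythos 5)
Your proof is correct and complete; the paper itself does not prove this lemma but cites it from Dey--Takahashi \cite[Lemma 3.8]{DeyT}, and your argument (identifying $\lhom_R(M,M)$ with a submodule of $\Ext_R^1(M,\Omega M)$ via the connecting map of $0\to\Omega M\to F\to M\to 0$, and using that $a\cdot\mathrm{id}_M$ factoring through a projective kills all $\Ext_R^m(M,-)$ for $m>0$) is the standard one underlying that reference. No issues.
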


Let $R$ be a local ring.
For any $M,N \in\mod (R)$ and $n\ge 0$, one has $\Ext_R^{n+1}(M,N)\cong\Ext_R^1(\Omega^n M,N)$.
Lemma \ref{DeyT3.8} implies that $\ca^{n+1}(R)$ is equal to $\Ann(\underline{\Omega^n}(R))$, where $\underline{\Omega^n}(R)$ is the subcategory of $\lmod(R)$ consisting of $n$th syzygy modules.
In particular, $\ca(R)=\Ann(\underline{\Omega^n}(R))$ for some $n\ge 0$.

\begin{lem}\cite[Corollary 4.4(2)]{DaoT}\label{DaoT4.4(2)}
Let $R$ be a local ring of dimension $d$, $a$ an element of $R$, and $n$ an integer.
Suppose that $a\Ext_R^i(M,N)=0$ for all finitely generated $R$ modules $M,N$ having finite length and all $n\le i\le n+2d$.
Then $a^{2^{2d}}\Ext_R^i(M,N)=0$ for all finitely generated $R$ modules $M,N$.
\end{lem}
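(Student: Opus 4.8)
The plan is to reduce the statement for arbitrary finitely generated modules to the case of finite-length modules by an induction on dimension, using prime filtrations and the behaviour of $\Ext$ under short exact sequences. First I would recall the standard fact that for any $M\in\mod(R)$ there is a filtration $0=M_0\subset M_1\subset\cdots\subset M_t=M$ with each quotient $M_i/M_{i-1}\cong R/\p_i$ for primes $\p_i$; hence, using the long exact sequence of $\Ext$ in each variable together with the elementary observation that if $a^k$ kills $\Ext^i$ of the two outer terms then $a^{2k}$ kills $\Ext^i$ of the middle term, it suffices to treat modules of the form $M=R/\p$, $N=R/\q$. The exponent doubling in this step is exactly what produces the final exponent $2^{2d}$: each dimension drop costs one squaring, and the induction runs through at most $2d$ stages in the argument below (a $d$-step reduction in each of the two variables, with a factor $2$ from the two-out-of-three principle at each stage).

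Next I would set up the induction on $d=\dim R$. If $d=0$ every finitely generated module has finite length, so there is nothing to prove. For the inductive step, I would localize: for a non-maximal prime $\p$, $R_\p$ has dimension $<d$, and by the hypothesis (which is stable under localization, since $\Ext_R^i(M,N)_\p\cong\Ext_{R_\p}^i(M_\p,N_\p)$ and finite-length $R_\p$-modules come from finite-length-on-a-punctured-neighbourhood $R$-modules) one gets that $a^{(\text{something})}$ annihilates $\Ext_{R_\p}^i(M_\p,N_\p)$ for all $i$ in the relevant range. The point of localizing is to control $\Ext_R^i(R/\p,R/\q)$ away from the maximal ideal: for $\p\ne\m$, a suitable power of $a$ kills $\Ext_R^i(R/\p,R/\q)$ after inverting an element not in $\p$, hence kills it up to $\m$-power torsion. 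Combining this with a Noetherian-induction / prime-filtration argument on the support, one reduces to the case where $\Ext_R^i(M,N)$ has finite length, i.e. is killed by a power of $\m$; but then one uses the range $n\le i\le n+2d$ together with a dimension-shifting argument (replacing $M$ by syzygies, which only shifts $i$) and the finite-length hypothesis to conclude. Tracking the exponents through the two variables and the $2d+1$ cohomological degrees yields $a^{2^{2d}}$.

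The main obstacle I expect is the careful bookkeeping that converts the ``up to $\m$-primary torsion'' statements obtained by localization into an honest annihilation of the full $\Ext$ module, while simultaneously keeping the exponent of $a$ under control so that it does not blow up beyond $2^{2d}$. Concretely, one must choose the filtration and the order of induction (first cut down $\dim\image(M)$, then $\dim\image(N)$, or interleave them) so that each squaring is genuinely needed only once per unit drop in dimension; a naive application of the two-out-of-three principle at every step of a length-$t$ filtration would give an exponent depending on $t$, not on $d$. Resolving this requires using that $\Ext_R^i(R/\p,N)$ for $\dim R/\p<d$ is handled entirely by the inductive hypothesis at the localized ring $R_\p$ (dimension $<d$, so exponent $2^{2(d-1)}$ there), and that passing from $R_\p$ back to $R$ costs only finitely many additional controlled factors absorbed into the range $[n,n+2d]$. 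I would also need the standard lemma that an element annihilating $\Ext_R^i$ of all finite-length modules in a window of length $2\dim R$ annihilates a power of $\m$ times $\Ext_R^i$ of all modules — which is essentially the finite-length case of the statement and should be isolated as a preliminary step or cited from~\cite{DaoT}.
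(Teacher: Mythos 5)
There is a genuine gap, and it sits exactly where you locate the difficulty yourself. First, the localization step does not work: the hypothesis concerns $R$-modules of finite length, i.e.\ modules supported at $\m$, and these localize to zero at every non-maximal prime $\p$. So knowing that $a$ kills $\Ext_R^i(M,N)$ for all finite-length $R$-modules gives no control whatsoever over $\Ext_{R_\p}^i$ of finite-length $R_\p$-modules; your claim that ``finite-length $R_\p$-modules come from finite-length-on-a-punctured-neighbourhood $R$-modules'' does not transport the hypothesis down to $R_\p$, so the inductive appeal to the statement over $R_\p$ (with exponent $2^{2(d-1)}$) has no input. Second, the prime-filtration reduction to $M=R/\p$, $N=R/\q$ with exponent doubling at each link of the filtration produces an exponent depending on the filtration length $t$, not on $d$; you flag this obstacle, but the proposed fix relies on the flawed localization step, and the ``standard lemma'' you would isolate at the end (finite-length window annihilation forces $\m$-power-times-$\Ext$ annihilation for all modules) is essentially the content of the result you are trying to prove, so invoking it is circular.

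The cited argument of Dao and Takahashi proceeds differently, and the same mechanism appears verbatim in the proof of Proposition \ref{main lemma of general}(2) of this paper: one inducts on the dimension of the modules $M$ and $N$ themselves, never on $\dim R$ by localization. Given $M$ of positive dimension, choose $x\in\m$ outside every non-maximal associated prime of $M$; then $(0:x^j)_M$ has finite length and $M/x^jM$ has strictly smaller dimension, and the four-term exact sequence $0\to (0:x^j)_M\to M\xrightarrow{x^j}M\to M/x^jM\to 0$ yields, via the induced long exact sequences, that if suitable powers of $a$ kill the $\Ext$ of the two outer modules (in a window shifted by one degree), then a power of $a$ multiplies $\Ext_R^i(M,N)$ into $x^j\Ext_R^i(M,N)$ for every $j$; Krull's intersection theorem then gives honest annihilation. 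Each of the at most $d$ dimension drops in each of the two variables doubles the exponent and shrinks the cohomological window by one, which is precisely what produces the exponent $2^{2d}$ and the required range $n\le i\le n+2d$. Your proposal is missing this choice of a generic element $x$, the finite-length module $(0:x^j)_M$ that feeds the hypothesis directly, and the Krull-intersection conclusion; these are the ideas that make the bookkeeping close, and without them the reduction does not go through.
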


The proposition below says that some equalities exist for the radicals of annihilators of stable categories.
There are many studies on the relationship between $\ca(R)$ and the definition ideal of $\sing (R)$; see \cite{BHST, IT, L} for instance.
Also, the annihilators of $\lcm(R)$ and $\lcm_0 (R)$ are defining ideals of $\sing (R)$ under several assumptions; see \cite{DaoT, DeyT}.
Proposition \ref{main lemma of general} describes the behavior of those ideals in general and improves some of their results; see Remark \ref{non reg non trivial}(1).

\begin{prop}\label{main lemma of general}
Let $(R, \m)$ be a local ring of dimension $d$.
Put $I=\Ann(\lc_0(R))=\Ann(\ldeep_0(R))$.
\begin{enumerate}[\rm(1)]
\item There is an equality $\sqrt{\smash[b]{\ca(R)}}=\bigcap_{\p\in\sing(\widehat{R})} (\p\cap R)$.
\item If $R$ has a positive depth, then there are equalities $I=\Ann(\lc(R))=\Ann(\ldeep(R))$, otherwise one has $I^{d+1}\subseteq\Ann(\ldeep(R))\subseteq\Ann(\lc(R))\subseteq I$.
In particular, $\sqrt{\smash[b]{I}}=\sqrt{\smash[b]{\Ann(\ldeep(R))}}=\sqrt{\smash[b]{\Ann(\lc(R))}}$.
\item Suppose that $R$ is a Cohen--Macaulay local ring.
Then $\sqrt{\smash[b]{\Ann(\lcm(R))}}=\sqrt{\smash[b]{\ca(R)}}$.
\end{enumerate}
\end{prop}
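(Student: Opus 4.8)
The plan is to handle the three parts essentially independently, reducing each to the completion $\widehat R$ whenever possible and exploiting that the relevant $\Ext$-groups and annihilators are insensitive to completion in the cases that matter.

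\smallskip

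\emph{Proof plan for (1).} The first step is to prove $\sqrt{\smash[b]{\ca(R)}}=\sqrt{\smash[b]{\ca(\widehat R)}}\cap R$. The inclusion $\ca(\widehat R)\cap R\subseteq\ca(R)$ is immediate from faithful flatness of $R\to\widehat R$ via $\Ext_R^m(M,N)\otimes_R\widehat R\cong\Ext_{\widehat R}^m(\widehat M,\widehat N)$. For the reverse, observe that a finite length $R$-module is the same thing as a finite length $\widehat R$-module and that $\Ext_R^m(M,N)\cong\Ext_{\widehat R}^m(M,N)$ for such modules; hence if $a\in\ca(R)=\ca^n(R)$ then $a$ annihilates $\Ext_{\widehat R}^i(M,N)$ for all finite length $\widehat R$-modules $M,N$ and all $i\ge n$, so Lemma \ref{DaoT4.4(2)} applied to $\widehat R$ (which has the same dimension $d$) yields $a^{2^{2d}}\in\ca(\widehat R)$, whence $a\in\sqrt{\smash[b]{\ca(\widehat R)\cap R}}=\sqrt{\smash[b]{\ca(\widehat R)}}\cap R$. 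Taking radicals gives the displayed equality. Finally $\widehat R$, being a complete local ring, is excellent, so $\mod(\widehat R)$ has a strong generator by Dey--Lank--Takahashi, and therefore $\sqrt{\smash[b]{\ca(\widehat R)}}=\bigcap_{\p\in\sing(\widehat R)}\p$ by Iyengar--Takahashi; intersecting with $R$ gives (1). The only routine bookkeeping is the behaviour of intersections with $R$ under radicals.

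\smallskip

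\emph{Proof plan for (2).} The containments $\C(R)\subseteq\Deep(R)$ (test the depth inequality at $\m$), $\C_0(R)\subseteq\C(R)$ and $\Deep_0(R)\subseteq\Deep(R)$, together with the fact that a larger $R$-linear category has a smaller annihilator, give $\Ann(\ldeep(R))\subseteq\Ann(\lc(R))\subseteq I$ for free; the content is the reverse. The key tool is the elementary fact that for a short exact sequence $0\to X\to Z\to Y\to0$ one has $\Ann(Z)\supseteq\Ann(X)\cdot\Ann(Y)$ --- proved by lifting a factorization of $b\cdot\mathrm{id}_Y$ through a projective along $Z\twoheadrightarrow Y$, reducing the discrepancy to a map $Z\to X$, and composing with a factorization of $a\cdot\mathrm{id}_X$ --- and the observation that $\Ann$ is unchanged under syzygies, finite direct sums and direct summands. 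With these in hand I would run a Noetherian induction on the dimension of the non-free locus of a module in $\Deep(R)$ (resp. $\C(R)$): at each step one produces a short exact sequence with one term of the same type but with non-free locus of strictly smaller dimension, so that after at most $d+1$ steps the non-free locus is $\{\m\}$ and the module lands in $\Deep_0(R)=\C_0(R)$; each step costs one factor of $I$, giving $I^{d+1}\subseteq\Ann(\ldeep(R))$. When $\depth R>0$ a general nonzerodivisor avoids the non-maximal associated primes involved, which permits the reduction with no loss of a factor, so $I=\Ann(\ldeep(R))=\Ann(\lc(R))$. I expect the delicate point here to be constructing the single reduction step with the depth conditions under control.

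\smallskip

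\emph{Proof plan for (3).} First dispose of $d=0$: then $\cm(R)=\cm_0(R)=\mod(R)$ and $\m$ is nilpotent, so $\sqrt{\smash[b]{\Ann(\lcm(R))}}$ and $\sqrt{\smash[b]{\ca(R)}}$ both equal $\m$ (or $R$, if $R$ is regular). For $d\ge1$ one inclusion is short: by the remark after Lemma \ref{DeyT3.8}, $\ca(R)=\ca^n(R)=\Ann(\underline{\Omega^{n-1}}(R))$ for $n\gg0$, and for $n-1\ge d$ every $(n-1)$st syzygy is maximal Cohen--Macaulay, so $\underline{\Omega^{n-1}}(R)\subseteq\lcm(R)$ and hence $\Ann(\lcm(R))\subseteq\ca(R)$. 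For $\ca(R)\subseteq\sqrt{\smash[b]{\Ann(\lcm(R))}}$ I would use (2) to replace $\lcm(R)$ by $\lcm_0(R)$ (legitimate since $\depth R=d\ge1$) and then reduce to $\widehat R$ as in (1), noting that for $M\in\cm_0(R)$ the module $\lhom_R(M,M)$ has finite length, so its annihilator is unchanged under completion. Over $\widehat R$ there is a canonical module $\omega$, and using the exact contravariant canonical duality one coresolves each maximal Cohen--Macaulay $\widehat R$-module by finite direct sums of copies of $\omega$ with a high syzygy module at the tail; splicing the resulting short exact sequences and applying the multiplicativity of $\Ann$ pushes a power of any $a\in\ca(\widehat R)$ into the annihilator of the stable category, which then descends to $R$. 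The hard part --- and the real content of (3) --- is obtaining an exponent uniform in the module, i.e. bounding the low-degree self-extensions $\Ext_{\widehat R}^{>0}(\omega,-)$ by a fixed power of $\ca(\widehat R)$; this is exactly the point at which the non-Gorenstein case goes beyond the Gorenstein argument of Esentepe.
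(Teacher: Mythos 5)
Your plan for (1) is essentially the paper's proof (reduce to $\widehat R$ via finite-length modules and Lemma \ref{DaoT4.4(2)}, then quote Dey--Lank--Takahashi and Iyengar--Takahashi over the complete ring), and the easy inclusion $\Ann(\lcm(R))\subseteq\ca(R)$ in (3) via high syzygies also matches. The problems are in the hard directions of (2) and (3), where the mechanisms you propose do not carry the argument.

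For (2), the lemma $\Ann(X)\cdot\Ann(Y)\subseteq\Ann(Z)$ for $0\to X\to Z\to Y\to 0$ is true, but it cannot drive the induction you describe: there is no natural way to exhibit a module $M\in\Deep(R)$ as an extension whose terms have strictly smaller non-free locus. The sequences actually available are $0\to M\xrightarrow{x^i}M\to M/x^iM\to 0$, in which $M$ itself occurs, so multiplicativity gives nothing; moreover $M/x^iM$ leaves $\Deep(R)$, and one must pass to $\Omega(M/x^iM)$ (chosen with $x$ avoiding the height-$n$ primes of the non-free locus and the non-maximal associated primes) to stay in the category. The paper instead applies $\Hom(-,\Omega M)$ to these sequences for \emph{all} $i$, uses Lemma \ref{DeyT3.8} and the induction hypothesis to kill $\Ext_R^1(\Omega(M/x^iM),\Omega M)$, deduces $a\,\Ext_R^1(M,\Omega M)\subseteq x^i\Ext_R^1(M,\Omega M)$, and concludes by Krull's intersection theorem; this is exactly why no factor of $I$ is lost when $\depth R>0$, a point your ``general nonzerodivisor'' remark does not explain. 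In depth zero a separate four-term sequence through $(0:x^i)_M$ is needed, and that is where the exponent $d-n+1\le d+1$ comes from. Note also that ``$\Ann$ is unchanged under syzygies'' is only an inclusion $\Ann(M)\subseteq\Ann(\Omega M)$ in general; equality is a Gorenstein phenomenon (Lemma \ref{lemma AACE example1}(2)).

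For (3), your reduction to $\widehat R$ and to $\lcm_0$ is fine, but the step you yourself flag as ``the hard part'' is precisely the content, and $\omega$-coresolutions plus extension-multiplicativity do not supply it (a map factoring through copies of $\omega$ is not yet stably trivial). The paper closes this gap with the trace ideal of the canonical module: $\operatorname{tr}\omega=\ann_R\Ext_R^1(\omega,\Omega\omega)$ by Dao--Kobayashi--Takahashi; every prime not containing $\ca^n(R)$ is regular, so $\omega_\p$ is free there and hence $\ca^n(R)^l\subseteq\operatorname{tr}\omega$ for some $l$; and Dey--Takahashi's Lemma 4.1(2) gives $(\operatorname{tr}\omega)^{n-1}\cdot\ann_R\Ext_R^n(M,\Omega^nM)\subseteq\ann_R\Ext_R^1(M,\Omega M)$ uniformly in $M\in\cm(R)$, yielding $\ca(R)^{l(n-1)+1}\subseteq\Ann(\lcm(R))$ over a ring with canonical module, after which completion handles the general case. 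Without an input of this kind your exponent cannot be made uniform in the module, so (3) as proposed is incomplete.
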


\begin{proof}
(1) Since $\widehat{R}$ is a noetherian quasi-excellent ring of finite dimension, \cite[Corollary C]{DLT} implies that $\mod(\widehat{R})$ admits a strong generator.
By \cite[Theorem 1.1]{IT}, we have $\V(\ca(\widehat{R}))=\sing(\widehat{R})$, and hence
$$
\sqrt{\smash[b]{\ca(\widehat{R})\cap R}}=\bigcap_{\p\in\sing(\widehat{R})} (\p\cap R).
$$
We claim $\sqrt{\smash[b]{\ca(R)}}=\sqrt{\smash[b]{\ca(\widehat{R})\cap R}}$.
There exists $n$ such that $\ca^n(R)=\ca(R)$ and $\ca^n(\widehat{R})=\ca(\widehat{R})$.
It follows from \cite[Theorem 4.5]{BHST} that $\ca^n(\widehat{R})\cap R\subseteq\ca^n(R)$.
Let $M,N$ be finitely generated $\widehat{R}$-modules having finite length, and let $i\ge n$. 
These modules also have finite length as $R$-modules, and there are isomorphisms $\Ext_{R}^i(M,N)\cong\Ext_{R}^i(M,N)\otimes_R \widehat{R}\cong\Ext_{\widehat{R}}^i(M,N)$ of $R$-modules.
Any element of $R$ belonging to $\ca^n(R)$ annihilates $\Ext_{\widehat{R}}^i(M,N)$.
By Lemma \ref{DaoT4.4(2)}, we obtain $\ca^n(R)^{2^{2d}}\subseteq\ca(\widehat{R})\cap R$.

(2) Since $\C_0(R)\subseteq\C(R)\subseteq\Deep(R)$, we have $\Ann(\ldeep(R))\subseteq\Ann(\lc(R))\subseteq I$.
We have only to show that if $R$ has a positive depth, $\Ann(\ldeep(R))$ contains $I$, otherwise $\Ann(\ldeep(R))$ contains $I^{d+1}$.
Let $a\in I$ and $M\in\Deep(R)$.
Suppose that $M_\p$ is a free $R_\p$-module for any ideal $\p$ of $R$ such that $\height\p<n$.
We claim by descending induction on $n$ that if $R$ has a positive depth, $a$ annihilates $\Ext_R^1(M,\Omega M)$, otherwise $a^{d-n+1}$ annihilates $\Ext_R^1(M,\Omega M)$.
In the case $n=d$, $M$ belongs to $\Deep_0(R)$, which means that the claim holds.
Let $0\le n<d$.
The nonfree locus $V:=\{\p\in\spec(R) \mid M_\p$ is not free as an $R_\p$-module$\}$ of $M$ is a closed subset of $\spec(R)$; see \cite[Theorem 4.10]{Mat}.
There are at most a finite number of prime ideals of height $n$ belonging to $V$ by assumption.
We can choose $x\in\m$ such that it does not belong to any prime ideal of height $n$ belonging to $V$ and any non-maximal associated prime ideal of $M$.
Let $i>0$.
If $\depth R=0$, then $\depth \Omega(M/x^i M)\ge 0=\depth R$, otherwise $\depth M\ge\depth R>0$ and thus $x$ is an $M$-regular element, which implies $\depth \Omega(M/x^i M)\ge\depth R$.
Since $x$ is $M_\p$-regular for any non-maximal prime ideal $\p$, we see that $\Omega(M/x^i M)$ is in $\Deep(R)$ and $\Omega(M/x^i M)_\p$ is a free $R_\p$-module for any ideal $\p$ such that $\height\p<n+1$.

We consider the case where $R$ has a positive depth.
For any positive integer $i$, there is an exact sequence $0\to M\xrightarrow{x^i} M\to M/x^i M\to 0$, which induces an exact sequence 
$$
\Ext_{R}^1(M,\Omega M)\xrightarrow{x^i} \Ext_{R}^1(M,\Omega M)\to \Ext_R^2(M/x^i M,\Omega M)\cong\Ext_{R}^1(\Omega (M/x^i M),\Omega M).
$$
It follows Lemma \ref{DeyT3.8} and the induction hypothesis that $a$ annihilates $\Ext_{R}^1(\Omega (M/x^i M),\Omega M)$, which implies $a \Ext_{R}^1(M,\Omega M)\subseteq x^i \Ext_{R}^1(M,\Omega M)$.
By Krull's intersection theorem, $a \Ext_{R}^1(M,\Omega M)=0$.

We deal with the case $\depth R=0$.
For any $i>0$, the submodule $(0:x^i)_M$ of $M$ has finite length and hence it belongs to $\Deep_0(R)$.
There is an exact sequence $0\to (0:x^i)_M\to M\xrightarrow{x^i} M\to M/x^i M\to 0$, which induces exact sequences
\begin{align*}
\Ext_{R}^1(x^i M,\Omega M)\to \Ext_{R}^1(M,\Omega M)\to \Ext_R^1((0:x^i)_M,\Omega M) \ {\rm and} \\
\Ext_{R}^1(M,\Omega M)\to \Ext_{R}^1(x^i M,\Omega M)\to \Ext_{R}^1(\Omega (M/x^i M),\Omega M).
\end{align*}
By induction hypothesis, we get $a\Ext_R^1((0:x^i)_M,\Omega M)=0=a^{d-n}\Ext_{R}^1(\Omega (M/x^i M),\Omega M)$.
An analogous argument to the proof of \cite[Theorem 4.3(3)]{DaoT} shows that 
$a^{d-n+1} \Ext_{R}^1(M,\Omega M)\subseteq x^i \Ext_{R}^1(M,\Omega M)$ for all $i>0$, which means $a^{d-n+1} \Ext_{R}^1(M,\Omega M)=0$.

(3) Let $M,N$ be finitely generated $R$-modules, $a$ an element of $R$ belonging to $\Ann(\lcm(R))$, and $i\ge d+1$ an integer.
Since $a$ annihilates $\lhom_R(\Omega^d M, \Omega^d M)$, it also annihilates $\Ext_R^{i-d}(\Omega^d M, N)\cong\Ext_R^{i}(M, N)$ by Lemma \ref{DeyT3.8}, which implies $\Ann(\lcm(R))\subseteq\ca^{d+1}(R)\subseteq\ca(R)$.

Now, we prove the converse inclusion.
Take $n>0$ such that $\ca^{n}(R)=\ca(R)$.
First, we deal with the case where $R$ admits a canonical module $\omega$.
Let $\operatorname{tr}\omega$ be the trace ideal of $\omega$ defined by the image of the canonical map $\Hom_R(M,R)\otimes_R M\to R$ given by $f\otimes x \mapsto f(x)$.
By \cite[Theorem 2.3]{DKT}, we have
$$
\operatorname{tr}\omega=\ann_{R}(\Ext_{R}^1(\omega,\Omega_R \omega)).
$$
On the other hand, for any prime ideal $\p$ that does not contain $\ca^n(R)$, $R_\p$ is regular.
Indeed, $\p$ does not even contain $\ann_{R}(\Ext_{R}^{n}(R/\p, R/\p))$, which means that the residue field of $R_\p$ has projective dimension at most $n-1$.
Then $\omega_\p$ is free, which implies $\operatorname{tr}\omega=\ann_{R}(\Ext_{R}^1(\omega,\Omega_R \omega))\nsubseteq\p$.
We have $\sqrt{\smash[b]{\ca^n(R)}}\subseteq\sqrt{\smash[b]{\operatorname{tr}\omega}}$ and hence $\ca^n(R)^l \subseteq \operatorname{tr}\omega$ for some $l>0$.
It follows from \cite[Lemma 4.1(2)]{DeyT} that for any $M\in\cm(R)$,
$$
(\operatorname{tr}\omega)^{n-1}
\cdot\ann_{R}(\Ext_{R}^n(M,\Omega^n M))
\subseteq \ann_{R}(\Ext_{R}^1(M,\Omega M)).
$$
This means that $\ann_{R}(\Ext_{R}^1(M,\Omega M))$ contains $\ca^n(R)^{l(n-1)+1}$, and so does $\Ann(\lcm(R))$.

Next, we handle the general case.
A similar argument to the proof of (1) shows that 
$$
\sqrt{\smash[b]{\ca(R)}}\subseteq\sqrt{\smash[b]{\ca(\widehat{R})\cap R}}\subseteq\sqrt{\smash[b]{\Ann(\lcm(\widehat{R}))\cap R}}\subseteq\sqrt{\smash[b]{\Ann(\lcm(R))}}
$$
since $\widehat{R}$ admits a canonical module.
\end{proof}

When R is excellent, the proof of (1) is essentially the same as the argument of the proof of \cite[Theorem 5.3]{IT}.
In the proof of (2), the method of Dey and Takahashi \cite{DeyT} plays an essential role.
On the other hand, it is worth mentioning that the comparison of annihilators without imposing specific assumptions is achieved.

\begin{proof}[Proof of Theorem \ref{main result general}]
The assertion is an immediate consequence of Proposition \ref{main lemma of general}.
\end{proof}

Here are two remarks on Proposition \ref{main lemma of general}.

\begin{rem}\label{non reg non trivial}
(1) Let $R$ be a local ring.
We put $V=\{\p\cap R \mid \p\in\sing(\widehat{R})\}$.
It is well known that $\sing(R)$ is closed and equal to $V$ if $R$ is G-ring; see \cite[(33.A) and Theorem 76]{Matsu} for instance.
Proposition \ref{main lemma of general} asserts that if $R$ is G-ring, then $\ca(R)$ is a defining ideal of $\sing(R)$ and so are $\Ann(\lcm(R))$ and $\Ann(\lcm_0(R))$ when $R$ is Cohen--Macaulay.
It refines \cite[Proposition 4.8]{DaoT}, \cite[Theorem 3.12]{DeyT}, and \cite[Theorems 5.3 and 5.4]{IT} in the case of local rings.
Here we describe our observations for $V$.

The minimal elements of $V$ are at most finite since $\sing(\widehat{R})$ is a closed subset of $\spec(\widehat{R})$.
So, $V$ is closed if and only if $V$ is \textit{stable under specialization}, that is, if $\q\in V$ and $\q'\in \spec(R)$ with $\q\subseteq \q'$, then $\q'\in V$.
Proposition \ref{main lemma of general}(1) asserts that $V$ is contained in $\V(\ca(R))$ and any prime ideal being in $\V(\ca(R))$ contains some minimal elements of $V$, which implies that $\V(\ca(R))$ is equal to the closure of $V$.
Hence $V$ is closed if and only if $\ca(R)$ is a defining ideal of $V$.
Note that $V$ contains $\sing(R)$; see \cite[Theorem 2.2.12(a)]{BH}.
It is easy to see that $\sing(R)$ is equal to $V$ if and only if the following condition is satisfied: for every $\p\in\spec(\widehat{R})$ and $\q=\p\cap R$, $\p$ belongs to $\sing(\widehat{R})$ if and only if $\q$ belongs to $\sing(R)$.
By \cite[Theorem 2.2.12(b)]{BH}, $\sing(R)=V$ if $\kappa(\p)\otimes_R \widehat{R}$ is regular for every prime ideal $\p$ of $R$, where $\kappa(\p)$ is the residue field of $R_\p$.
(This is weaker than the assumption that $R$ is G-ring.)
Also, $\sing(R)=V$ if and only if $\ca(R)$ is a defining ideal of $\sing(R)$.
Indeed, as $\sing(R)$ is stable under specialization, if $V$ is equal to $\sing(R)$, then $V$ is closed and thus we have $V=\V(\ca(R))$.
On the other hand, if $\sing(R)$ is not equal to $V$, then it is not equal to $\V(\ca(R))$ either.
In particular, $\V(\ca(R))\ne\sing(R)$ if $R$ has an isolated singularity and $\widehat{R}$ does not.

(2) The assumption in Proposition \ref{main lemma of general}(3) that the ring is Cohen--Macaulay is essential.
Let $(R, \m)$ be a non-Cohen--Macaulay local ring of depth zero such that $\widehat{R}$ has an isolated singularity.
(For example, a quotient of a formal power series ring $K \llbracket x, y \rrbracket/(x^2, xy)$ over a field $K$ is one such ring.)
Proposition \ref{main lemma of general}(1) implies that $\ca(R)$ is $\m$-primary.
Note that $\C(R)=\mod(R)$ by definition.
Let $n$ be a positive integer, and $a\in\ann_R\lhom(R/\m^n, R/\m^n)$.
Denote by $\tilde{a}$ the multiplication by $a$ on $R/\m^n$.
Then there are $R$-homomorphisms $f:R/\m^n\to R$ and $g:R\to R/\m^n$ such that $\tilde{a}=g\circ f$.
Since $\image(f)$ is contained in $(0:\m^n)_R\subseteq\Gamma_{\m}(R)$, $a$ belongs to $(0:\m^n)_R+\m^n$.
Let $\Gamma_{\m}(R)$ be an $\m$-torsion submodule of $R$.
We have
$$
\Ann(\lc(R))\subseteq\bigcap_{n>0}\ann_R\lhom(R/\m^n, R/\m^n)\subseteq\bigcap_{n>0} ((0:\m^n)_R+\m^n)\subseteq\bigcap_{n>0} (\Gamma_{\m}(R)+\m^n)=\Gamma_{\m}(R).
$$
As $R$ is not artinian, $\Gamma_{\m}(R)$ is not an $\m$-primary ideal.
Therefore, the ideal $\Ann(\lc(R))$ is not $\m$-primary, in other words, $\sqrt{\smash[b]{\Ann(\lc(R))}}\ne \sqrt{\smash[b]{\ca(R)}}$.
\end{rem}

Let us study $\Ann(\lc(R))$ in more detail when $R$ is not Cohen--Macaulay.
For a local ring $R$, the set of prime ideals $\p$ of $R$ such that $\dim (R/\p)=\dim R$ is denoted by $\assh(R)$.
The following proposition is not meaningful for Cohen--Macaulay rings.
Indeed, if $R$ is Cohen--Macaulay, for any prime ideal $\p$ of $R$, the equalities $\dim(R/\p)+\depth R_\p=\dim R -\height\p+\depth R_\p=\depth R$ hold.
On the other hand, one has $\dim(R/\p)+\depth R_\p=\dim R>\depth R$ for any $\p\in\assh(R)$ when $R$ is not Cohen--Macaulay.

\begin{prop}\label{nonCM}
Let $(R,\m)$ be a local ring.
Then $\Ann(\lc(R))\subseteq\p$ for all prime ideals $\p$ of $R$ such that $\dim(R/\p)+\depth R_\p>\depth R$.
\end{prop}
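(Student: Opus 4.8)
\emph{Reduction.} The assertion is equivalent to: for every $a\in R\setminus\p$ there is a module $M\in\C(R)$ with $a\notin\ann_R\lhom_R(M,M)$, i.e.\ $a\cdot\operatorname{id}_M$ does not factor through a free module. By Lemma~\ref{DeyT3.8} and the isomorphisms $\Ext_R^1(\Omega_R^{j}N,-)\cong\Ext_R^{j+1}(N,-)$, this amounts to producing $M\in\C(R)$, $N\in\mod(R)$ and $j\ge1$ with $a\,\Ext_R^j(M,N)\neq0$. The plan is to take $M$ to be a suitable syzygy of $R/\p$. The hypothesis is really only needed when $R_\p$ is regular: if $R_\p$ is not regular, then $\Omega_R^{\dim R}(L)\in\C(R)$ for every $L\in\mod(R)$ (a depth--lemma computation after localizing), so $\Ann(\lc(R))\subseteq\Ann(\underline{\Omega^{\dim R}}(R))=\ca^{\dim R+1}(R)\subseteq\ca(R)$, while $\ca(R)R_\p\subseteq\ca(R_\p)\subsetneq R_\p$ forces $\ca(R)\subseteq\p$. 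So the content is the case $R_\p$ regular, of dimension $c=\depth R_\p=\height\p$; write $\delta=\dim R/\p$ and $t=\depth R$, so $\delta+c>t$ and $\delta\ge1$ (as $\p\neq\m$).

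\emph{The target.} First I would pin down when $\Omega^n_R(R/\p)\in\C(R)$: localizing at $\q$, $(\Omega^n_R(R/\p))_\q\cong\Omega^n_{R_\q}((R/\p)_\q)\oplus(\text{free})$, so $\depth(\Omega^n_R(R/\p))_\q\ge\min(\depth R_\q,\depth(R/\p)_\q+n)$ by the depth lemma; since $R/\p$ is a domain its only associated prime is $\p$, and with the standard bound $\depth R_\q\le\dim(R_\q/\p R_\q)+\depth R_\p$ one gets $\Omega^n_R(R/\p)\in\C(R)$ once $n\ge\delta+c-1$ (the hypothesis $\delta+c>t$ being what keeps this number compatibly low). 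The module I would aim to annihilate is $\Ext_R^{\,c}(R/\p,R)$: since $R_\p$ is Gorenstein of dimension $c$, one has $(\Ext_R^c(R/\p,R))_\p\cong\Ext^c_{R_\p}(\kappa(\p),R_\p)\cong\kappa(\p)\neq0$, so $\Ext_R^c(R/\p,R)$ is a nonzero finitely generated module over the domain $R/\p$ with $\p$ in its support, hence faithful over $R/\p$; that is, $\ann_R\Ext_R^c(R/\p,R)=\p$. Thus it would suffice to prove $\Ann(\lc(R))\cdot\Ext_R^c(R/\p,R)=0$.

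\emph{The obstacle.} The difficulty is an off-by-one gap. To capture $\Ext_R^c(R/\p,R)$ inside $\ann_R\lhom_R(M,M)=\bigcap_{j\ge1,\,N}\ann_R\Ext^{j}_R(M,N)$ one would want $M=\Omega^{c-1}_R(R/\p)$, but $\depth(\Omega^{c-1}_R(R/\p))_\p=c-1<c$, so this module never lies in $\C(R)$; equivalently, every $M\in\C(R)$ is free at the regular prime $\p$, whence $\p$ lies in the support neither of $\lhom_R(M,M)$ nor of any $\Ext^{\ge1}_R(M,-)$, and no naive support argument can detect $\p$. Getting past this is the crux, and I expect it needs one of: (i) replacing $\Omega^{c-1}_R(R/\p)$ by a module $M\in\C(R)$ that agrees with it away from $\p$ but whose depth at $\p$ has been raised to $c$ by a carefully chosen extension --- so that degree-$c$ Ext information survives in $\lhom_R(M,M)$ --- the inequality $\delta+c>t$ ensuring the remaining local depth conditions stay in force; or (ii) passing to $R'=R/\underline xR$ for a maximal $R$-regular sequence $\underline x\subseteq\p$, where $\operatorname{grade}(\p R',R')=0$ forces $\Hom_{R'}(R'/\p R',R'/\p R')=R/\p$ to be faithful, and transporting the vanishing $a\,\Ext_R^{\gg0}(R/\p,-)=0$ across a Rees-type change-of-rings isomorphism $\Ext_R^{m}(R/\p,X)\cong\Ext_{R'}^{m-|\underline x|}(R/\p,X/\underline xX)$ down to degree $0$ over $R'$. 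Carrying out the extension in (i), or the needed module-realizations in (ii), and separately handling the degenerate case $c=0$ (where $R_\p$ is a field but need not make $R/\p$ land in $\C(R)$ on the nose), is where the real work lies.
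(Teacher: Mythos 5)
Your reduction and your diagnosis are sound, but the crux of the statement is left unproved, and you say so yourself. After disposing of the case where $R_\p$ is singular (that part is fine), you correctly identify the target ideal $\p=\ann_R\Ext^c_R(R/\p,R)$, $c=\depth R_\p$, and correctly observe that every $M\in\C(R)$ is free at the regular prime $\p$, so that $\lhom_R(M,M)_\p=0$ and hence $\Ann(M)\nsubseteq\p$ for each \emph{single} $M$; the containment $\Ann(\lc(R))\subseteq\p$ can therefore only come from an infinite family of test modules whose annihilators accumulate inside $\p$. At exactly this point the proposal stops: options (i) and (ii) are not carried out, and neither comes with a candidate family. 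Option (i) cannot work as phrased, because it aims at one module $M\in\C(R)$, and no such module can carry the degree-$c$ information, however the extension is chosen. Option (ii) is vaguer still: a maximal $R$-regular sequence inside $\p$ has length $\operatorname{grade}(\p,R)$, which need not equal $\depth R_\p$, and you never specify which objects of $\lc(R)$ the hypothesis $\dim(R/\p)+\depth R_\p>\depth R$ is to act on. A secondary issue: the bound $\depth R_\q\le\dim(R_\q/\p R_\q)+\depth R_\p$ that you call standard is available for homomorphic images of Cohen--Macaulay rings (hence after completion, via duality over a Gorenstein cover), but it is not something to quote without argument for an arbitrary local ring, which is the generality of the statement.

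For comparison, the paper's proof supplies precisely the missing family and the missing duality step. In the complete case write $R=S/I$ with $S$ Gorenstein local of dimension $d=\dim R$ and put $t=\depth R$. The syzygies $\Omega^t(R/\m^n)$ lie in $\C(R)$ for all $n$, so any $a\in\Ann(\lc(R))$ kills $\Ext^i_R(R/\m^n,R)$ for all $n$ and all $t<i\le d$, hence kills $H^i_\m(R)$, hence lies in $\ann_R\Ext^{d-i}_S(R,S)$ by local duality \cite[Theorem 8.1.1(a)]{BH}. If $\q\subseteq S$ is the preimage of $\p$, the hypothesis gives $0\le m:=\depth S_\q-\depth R_\p<d-t$, and the nonvanishing $\Ext^m_{S_\q}(R_\p,S_\q)\ne0$ \cite[Exercise 3.1.24]{BH} forces $\ann_R\Ext^m_S(R,S)\subseteq\p$; taking $i=d-m$ concludes. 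The general case is then reduced to the complete one by choosing $\q\in\spec(\widehat R)$ with $\q\cap R=\p$, $\dim(\widehat R/\q)=\dim(R/\p)$ and $\depth\widehat R_\q=\depth R_\p$, and showing $\Ann(\lc(R))^{2^{2d}}\subseteq\Ann(\lc_0(\widehat R))\cap R\subseteq\p$ via Takahashi's syzygy theorem, Proposition \ref{main lemma of general}(2) and Lemma \ref{DaoT4.4(2)}. Your sketch contains neither the duality mechanism nor the completion step, and these are where the hypothesis on $\p$ is actually used.
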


\begin{proof}
We handle the case where $R$ is complete.
We may assume $t:=\depth R<\dim R=:d$.
There is a Gorenstein local ring $S$ of dimension $d$ such that $R$ is a homomorphic image of $S$.
Let $a\in\Ann(\lc(R))$.
For any $n>0$ and $t<i\le d$, $\Omega^t(R/\m^n)$ is in $\lc(R)$ and $a$ annihilates $\Ext^i_R(R/\m^n, R)\cong\Ext^{i-t}_R(\Omega^t(R/\m^n), R)$, which implies that $a$ belongs to $\ann_R H_\m^i(R)=\ann_R \Ext^{d-i}_S(R,S)$; see \cite[Theorem 8.1.1(a)]{BH}.
Let $\p$ be a prime ideal of $R$ such that $\dim(R/\p)+\depth R_\p>\depth R$ and $\q$ the inverse image of $\p$ in $S$.
Note that $\dim(R/\p)=\dim(S/\q)=d-\height\q$ as $S$ is Cohen--Macaulay.
By assumption, we have 
$$
0\le m:=\depth S_\q-\depth R_\p=\height\q-\depth R_\p=d-\dim(R/\p)-\depth R_\p<d-t.
$$
It follows from \cite[Exercises 3.1.24]{BH} that $\Ext^{m}_{S_\q}(R_\p,S_\q)\ne 0$, which means that $\q$ contains $\ann_S \Ext^{m}_S(R,S)$.
Therefore, we get $a\in\ann_R \Ext^{m}_S(R,S)\subseteq\p$.

We address the general case.
For any prime ideal $\p$ of $R$ such that $\dim(R/\p)+\depth R_\p>\depth R$, we can choose a prime ideal $\q$ of $\widehat{R}$ such that $\dim (\widehat{R}/\q)=\dim (\widehat{R}/\p \widehat{R})$.
One has the equality $\p=\q\cap R$.
Indeed, if $\p\subsetneq\q\cap R$, then there is $\q'\in\spec(\widehat{R})$ such that $\q'\subsetneq\q$ and $\p=\q'\cap R$ by \cite[Theorem 9.5]{Mat}.
This means that $\q$ is not a minimal prime ideal of $\p\widehat{R}$, which contradicts the choice of $\q$.
As $\widehat{R}_\q/\p \widehat{R}_\q$ is artinian, we see that $\depth \widehat{R}_\q=\depth R_\p$ by \cite[Proposition 1.2.16(a)]{BH}.
We have
$$
\dim (\widehat{R}/\q)+\depth \widehat{R}_\q=\dim(R/\p)+\depth R_\p>\depth R=\depth \widehat{R}.
$$
It follows from Proposition \ref{main lemma of general}(2) and the first half of this proof that $\Ann(\lc_0(\widehat{R}))$ is contained in $\q$.
Let $a\in\Ann(\lc(R))$.
For any finitely generated $\widehat{R}$-modules $M,N$ having finite length and $i>0$, $a$ annihilates $\Ext_{R}^i(\Omega^t_R M,N)\cong\Ext_{R}^{t+i}(M,N)\cong\Ext_{\widehat{R}}^{t+i}(M,N)$; see the proof of Proposition \ref{main lemma of general}(1).
By \cite[Theorem 2.2]{T}, for any $X\in\C_0(\widehat{R})$, there exists a finitely generated $\widehat{R}$-module $M$ such that $X$ is a direct summand of $\Omega^t M$.
Lemma \ref{DaoT4.4(2)} yields that $a^{2^{2d}}$ annihilates $\Ext_{\widehat{R}}^{t+1}(M,\Omega X)\cong\Ext_{\widehat{R}}^1(\Omega^t M,\Omega X)$, which implies that $\Ann(\lc_0(\widehat{R}))$ contains $a^{2^{2d}}\widehat{R}$.
We have $\Ann(\lc(R))^{2^{2d}}\subseteq\Ann(\lc_0(\widehat{R}))\cap R\subseteq\q\cap R=\p$.
\end{proof}

The corollary below asserts that under a few assumptions, the converse of Theorem \ref{main result general} holds.

\begin{cor}\label{CMiff}
Let $R$ be a local ring.
Suppose that $\widehat{R}_\p$ is regular for some $\p\in\assh(\widehat{R})$.
(For example, $\widehat{R}$ is reduced.)
Then $R$ is Cohen--Macaulay if and only if $\sqrt{\smash[b]{\Ann(\lc(\widehat{R}))}}=\sqrt{\smash[b]{\ca(\widehat{R})}}$.
\end{cor}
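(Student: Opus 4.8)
The plan is to transfer the whole statement to the complete ring $\widehat{R}$ and then play Proposition \ref{main lemma of general}(3) against Proposition \ref{nonCM}. Throughout I will use the standard fact that $R$ is Cohen--Macaulay if and only if $\widehat{R}$ is, together with $\depth\widehat{R}=\depth R$ and $\dim\widehat{R}=\dim R$, so that it suffices to work entirely inside $\widehat{R}$, which moreover is its own completion.

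For the implication ``$R$ Cohen--Macaulay $\Rightarrow$ radicals agree'', I would note that when $\widehat{R}$ is Cohen--Macaulay, Convention \ref{convention annihilator}(3) gives $\C(\widehat{R})=\cm(\widehat{R})$, hence $\lc(\widehat{R})=\lcm(\widehat{R})$; then Proposition \ref{main lemma of general}(3) applied to $\widehat{R}$ yields $\sqrt{\Ann(\lc(\widehat{R}))}=\sqrt{\Ann(\lcm(\widehat{R}))}=\sqrt{\ca(\widehat{R})}$. This direction uses neither the hypothesis on $\assh(\widehat{R})$ nor Proposition \ref{nonCM}.

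For the converse I would argue by contraposition. Assume $\widehat{R}$ is not Cohen--Macaulay and set $t=\depth\widehat{R}<\dim\widehat{R}=:d$. Fix $\p\in\assh(\widehat{R})$ with $\widehat{R}_\p$ regular. Since $\dim(\widehat{R}/\p)=d=\dim\widehat{R}$, a saturated chain from $\p$ to $\m$ has length $d$, so $\p$ admits no proper prime subideal; thus $\height\p=0$, and $\widehat{R}_\p$, being a regular local ring of dimension $0$, is a field, whence $\depth\widehat{R}_\p=0$. Consequently $\dim(\widehat{R}/\p)+\depth\widehat{R}_\p=d>t=\depth\widehat{R}$, and Proposition \ref{nonCM} applied to $\widehat{R}$ gives $\Ann(\lc(\widehat{R}))\subseteq\p$, hence $\sqrt{\Ann(\lc(\widehat{R}))}\subseteq\p$. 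On the other hand, the proof of Proposition \ref{main lemma of general}(1) shows $\V(\ca(\widehat{R}))=\sing(\widehat{R})$; since $\widehat{R}_\p$ is regular we have $\p\notin\sing(\widehat{R})$, so $\ca(\widehat{R})\not\subseteq\p$ and therefore $\sqrt{\ca(\widehat{R})}\not\subseteq\p$. This forces $\sqrt{\Ann(\lc(\widehat{R}))}\ne\sqrt{\ca(\widehat{R})}$, as required. For the parenthetical example, if $\widehat{R}$ is reduced then every minimal prime $\p$ gives a reduced artinian local ring $\widehat{R}_\p$, i.e. a field, so the hypothesis is met at any $\p\in\assh(\widehat{R})$. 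The only point needing care is the bookkeeping for Proposition \ref{nonCM}: one must check that a prime in $\assh(\widehat{R})$ with regular localization genuinely satisfies its depth–dimension inequality, which is precisely the observation $\depth\widehat{R}_\p=0$ above; past that, the corollary is a formal assembly of results already proved.
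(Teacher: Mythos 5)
Your proof is correct and follows essentially the same route as the paper: the forward direction via Theorem \ref{main result general} (equivalently Proposition \ref{main lemma of general}(3) with $\lc(\widehat{R})=\lcm(\widehat{R})$), and the converse by contraposition, pinning $\Ann(\lc(\widehat{R}))\subseteq\p$ with Proposition \ref{nonCM} while $\ca(\widehat{R})\nsubseteq\p$ since $\V(\ca(\widehat{R}))=\sing(\widehat{R})$. Your explicit check that $\p\in\assh(\widehat{R})$ is minimal (so $\depth\widehat{R}_\p=0$ and the hypothesis of Proposition \ref{nonCM} holds) is exactly the observation the paper records in the remark preceding that proposition.
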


\begin{proof}
The ``only if'' part follows from Theorem \ref{main result general}.
In order to prove the ``if'' part, suppose that $R$ is not Cohen--Macaulay.
It follows from Proposition \ref{main lemma of general} (or Remark \ref{non reg non trivial}) that $\p$ does not contain $\ca(\widehat{R})$.
On the other hand, thanks to Proposition \ref{nonCM}, $\p$ contains $\Ann(\lc(\widehat{R}))$.
The proof is now completed.
\end{proof}

In particular, for complete local domains, the above corollary says that the equality in Theorem \ref{main result general} characterizes the Cohen--Macaulayness of the ring.
The inequality $\sqrt{\smash[b]{\Ann(\lc(R))}}\ne \sqrt{\smash[b]{\ca(R)}}$ for the ring $R$ appearing in Remark \ref{non reg non trivial}(2) also follows from Corollary \ref{CMiff}.

\begin{rem}
(1) The equivalence of Corollary \ref{CMiff} does not necessarily hold.
A quotient of a formal power series ring $R=K \llbracket x, y \rrbracket/(x^2y^3, x^3y^2)$ over a field $K$ is a non-Cohen--Macaulay complete local ring satisfying $\sing(R)=\spec(R)$.
The implications $0\subseteq\Ann(\lc(R))\subseteq\ca(R)$ holds; see the proof of Proposition \ref{main lemma of general}(3).
By Proposition \ref{main lemma of general}(1), $\ca(R)$ is nilpotent and thus $\sqrt{\smash[b]{0}}=\sqrt{\smash[b]{\Ann(\lc(R))}}=\sqrt{\smash[b]{\ca(R)}}$.
More generally, a non-Cohen--Macaulay local ring $R$ with $\sing(\widehat{R})=\spec(\widehat{R})$ is a counterexample to the equivalence in Corollary \ref{CMiff}.

(2) Let $(R,\m,k)$ be a local ring of dimension $d$.
If either $\Ann(\lc(R))$, $\Ann(\lc_0(R))$, or $\ca(R)$ is equal to $R$, then so is $\ann_R\Ext^n(\Omega^d(k),\Omega^{n+d}(k))$ for some positive integer $n$; see Lemma \ref{DeyT3.8}.
This means $\Ext^1(\Omega^{n+d-1}(k),\Omega^{n+d}(k))=0$, which implies $\Omega^{n+d-1}(k)$ is free, and thus $R$ is regular.
Conversely, if $R$ is regular, then all modules belonging to $\lc(R)$ are free.
It is easy to see that the equalities $\Ann(\lc(R))=\Ann(\lc_0(R))=\ca^{d+1}(R)=\ca(R)=R$ hold.
\end{rem}

Akdenizli, Aytekin, \c{C}etin, and Esentepe \cite{AACE} studied the Alexandrov topology over a Gorenstein local ring and gave an equivalence between the compactness of that topology and the existence of a minimum element with respect to the preorder.
Theorem \ref{main TFAE} characterize the existence of a minimum element in terms of the annihilators of stable categories, the cohomology annihilator, and the singular locus.

\begin{thm}\label{main TFAE}
Let $(R,\m)$ be a non-regular Cohen--Macaulay local ring. Then the following are equivalent: 
\begin{enumerate}[\rm(1)]
\item $\Ann(\lcm_0(R))=\Ann(M)$ for some $M\in\lcm_0(R)$.
\item $\Ann(\lcm_0(R))$ is $\m$-primary;
\item $\Ann(\lcm(R))$ is $\m$-primary;
\item $\ca(R)$ is $\m$-primary;
\item $\widehat{R}$ has an isolated singularity.
\end{enumerate}
\end{thm}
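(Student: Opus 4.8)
The plan is to prove most of the equivalences by reducing to Theorem~\ref{main result general}, and to link condition (1) to the rest by an elementary argument inside an artinian quotient of $R$. Recall that an ideal $J$ of $R$ is $\m$-primary exactly when $\sqrt{\smash[b]{J}}=\m$. Since Theorem~\ref{main result general} gives $\sqrt{\smash[b]{\Ann(\lcm(R))}}=\sqrt{\smash[b]{\Ann(\lcm_0(R))}}=\sqrt{\smash[b]{\ca(R)}}$, conditions (2), (3), and (4) are equivalent at once. For (4)~$\Leftrightarrow$~(5) I would use that, $R$ being non-regular, $\widehat{R}$ is non-regular as well, so the maximal ideal of $\widehat{R}$ lies in $\sing(\widehat{R})$; hence $\widehat{R}$ has an isolated singularity precisely when $\sing(\widehat{R})$ consists of the maximal ideal alone. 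In that case $\bigcap_{\p\in\sing(\widehat{R})}(\p\cap R)=\m$, so $\ca(R)$ is $\m$-primary by Theorem~\ref{main result general}. Conversely, if some non-maximal $\q\in\sing(\widehat{R})$ exists, then $\q\cap R$ is a prime properly contained in $\m$ — otherwise $\m\widehat{R}\subseteq\q$ would force $\q$ to be maximal — so $\bigcap_{\p\in\sing(\widehat{R})}(\p\cap R)\subseteq\q\cap R\subsetneq\m$ and $\ca(R)$ is not $\m$-primary. This settles (2)~$\Leftrightarrow$~(3)~$\Leftrightarrow$~(4)~$\Leftrightarrow$~(5).

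Next I would prove (1)~$\Rightarrow$~(2). Suppose $\Ann(\lcm_0(R))=\Ann(M)$ with $M\in\cm_0(R)$. Then $\Ann(M)=\ann_R\lhom_R(M,M)$, and the finitely generated $R$-module $\lhom_R(M,M)$ is supported only at $\m$: stable Hom commutes with localization for finitely generated modules over a noetherian ring, and $M_\p$ is $R_\p$-free for every non-maximal $\p$, so $\lhom_R(M,M)_\p=\lhom_{R_\p}(M_\p,M_\p)=0$. Hence $\lhom_R(M,M)$ has finite length and $\Ann(M)$ is $\m$-primary or equals $R$. The latter is impossible, since $\Omega_R^{\dim R}(R/\m)$ lies in $\cm_0(R)$ and is non-free ($R$ being non-regular), whence $\Ann(\lcm_0(R))\subseteq\Ann(\Omega_R^{\dim R}(R/\m))\subsetneq R$. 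So $\Ann(\lcm_0(R))$ is $\m$-primary.

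The heart of the argument is (2)~$\Rightarrow$~(1). Assuming $\Ann(\lcm_0(R))$ is $\m$-primary, the quotient $\overline{R}:=R/\Ann(\lcm_0(R))$ is artinian. I would consider the family $\{\Ann(M)\mid M\in\cm_0(R)\}$: it is nonempty and every member contains $\Ann(\lcm_0(R))$, so it is a family of ideals of $\overline{R}$, and it is closed under finite intersections because $\Ann(M)\cap\Ann(N)=\Ann(M\oplus N)$ with $M\oplus N\in\cm_0(R)$ — here one uses that an element annihilating $\id_M$ in the stable category annihilates every stable morphism into and out of $M$. Since $\overline{R}$ is artinian, this family has a minimal element $\Ann(M_0)$. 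For any $N\in\cm_0(R)$ we have $\Ann(M_0\oplus N)=\Ann(M_0)\cap\Ann(N)\subseteq\Ann(M_0)$ and $M_0\oplus N\in\cm_0(R)$, so minimality forces $\Ann(M_0)\subseteq\Ann(N)$. Therefore $\Ann(M_0)\subseteq\bigcap_{N\in\cm_0(R)}\Ann(N)=\Ann(\lcm_0(R))$, and the reverse inclusion is trivial, so (1) holds with $M=M_0$.

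Since Theorem~\ref{main result general} carries most of the load, the only genuinely new step is (2)~$\Rightarrow$~(1); the main obstacle there — exhibiting a single module whose annihilator equals $\Ann(\lcm_0(R))$ — is overcome by the descending-chain/minimality argument inside the artinian quotient. The remaining points to verify carefully are the standard facts that stable Hom commutes with localization and that $\Ann(M\oplus N)=\Ann(M)\cap\Ann(N)$.
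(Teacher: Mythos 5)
Your proposal is correct and takes essentially the same route as the paper: the equivalences (2)$\Leftrightarrow$(3)$\Leftrightarrow$(4)$\Leftrightarrow$(5) are deduced from Theorem \ref{main result general} (you merely spell out the (4)$\Leftrightarrow$(5) step that the paper leaves implicit), and (2)$\Rightarrow$(1) is the same descending-chain-condition argument in the artinian quotient $R/\Ann(\lcm_0(R))$, phrased via a minimal element of the family $\{\Ann(M)\}$ instead of the paper's explicitly constructed strictly descending chain. Your (1)$\Rightarrow$(2) argument (localization of stable Hom plus non-freeness of $\Omega_R^{\dim R}(R/\m)$ to rule out the unit ideal) likewise matches the paper's brief justification.
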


\begin{proof}
If $\Ann(\lcm_0(R))=\Ann(M)$ for some $M\in\lcm_0(R)$, then $\Ann(\lcm_0(R))$ is $\m$-primary since $M$ is locally free on the punctured spectrum and $\Ann(\lcm_0(R))$ is proper; see Remark \ref{non reg non trivial}.
Hence the implication $(1)\Rightarrow(2)$ holds true.
In order to prove the converse implication, we assume that $\Ann(\lcm_0(R))$ is $\m$-primary.
Take an object $M_0\in\lcm_0(R)$.
For each integer $i\ge 0$, if $\Ann(M_i)\nsubseteq\Ann(N_i)$ for some $N_i\in\lcm_0(R)$, then we put $M_{i+1}=M_i \oplus N_i$.
It is easy to see that
$$
\Ann(M_{i+1})=\Ann(M_i)\cap\Ann(N_i)\subsetneq\Ann(M_i);
$$
see Lemma \ref{DeyT3.8} for instance.
As $R/\Ann(\lcm_0(R))$ is artinian, the descending chain 
$$
\Ann(\lcm_0(R))\subseteq\cdots\subsetneq\Ann(M_{i+1})\subsetneq\Ann(M_i)\subsetneq\cdots\subsetneq\Ann(M_1)\subsetneq\Ann(M_0)
$$
of ideals of $R$ must stop after a finite number steps, which means $\Ann(\lcm_0(R))=\Ann(M_i)$ for some $i$.
The proof of $(2)\Rightarrow(1)$ is complete.
It follows from Theorem \ref{main result general} that $(2)\Leftrightarrow(3)\Leftrightarrow(4)\Leftrightarrow(5)$ hold.
\end{proof}

Theorem \ref{main cor of TFAE} is a direct corollary of the above theorem.

\begin{proof}[Proof of Theorem \ref{main cor of TFAE}]
By \cite[Theorem 2.6]{AACE}, the condition (1) holds if and only if $\Ann(\lcm_0(R))=\Ann(M)$ for some $M\in\lcm_0(R)$.
It follows from Lemma \ref{DeyT3.8} and \cite[Theorem 1.1]{DaoT} that (2) holds true if and only if $\Ann(\lcm_0(R))$ is $\m$-primary.
Hence the equivalence follows from Theorem \ref{main TFAE}.
If $\widehat{R}$ has an isolated singularity, then so does $R$, and thus $\lcm_0(R)=\lcm(R)$.
The last assertion follows from (1).
\end{proof}

Note that $R$ has an isolated singularity if and only if so does $\widehat{R}$ when all the formal fibers of $R$ are regular.
Therefore, we can remove by Theorem \ref{main cor of TFAE} the assumptions imposed in \cite[Theorem 1.2(3)]{DeyT} for the case $c=0$ that the ring is excellent, equicharacteristic, and admits a canonical module.
In particular, Theorem \ref{main cor of TFAE} partly refines \cite[Theorem 1.1(1)]{DaoT}.
Also, Theorem \ref{main cor of TFAE} improves \cite[Example 3.1]{AACE}.

The followin corollary follows immediately from Theorem \ref{main cor of TFAE}.

\begin{cor}\label{G-ring quasi excellent corollary of TFAE}
Let $R$ be a Cohen--Macaulay local ring having an isolated singularity.
If $\kappa(\p)\otimes_R \widehat{R}$ is regular for every prime ideal $\p$ of $R$, where $\kappa(\p)$ is the residue field of $R_\p$, then $\A(\lcm(R))$ is compact.
In particular, if $R$ is {\rm (}quasi-{\rm )}excellent, then $\A(\lcm(R))$ is compact.
\end{cor}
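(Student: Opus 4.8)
The plan is to reduce everything to Theorem~\ref{main cor of TFAE} by checking that, under the stated hypothesis, the completion $\widehat{R}$ has an isolated singularity. First I would dispose of the trivial case $R$ regular: then every maximal Cohen--Macaulay $R$-module is free by the Auslander--Buchsbaum formula, so every object of $\cm(R)$ becomes isomorphic to $0$ in $\lcm(R)$, the preorder on isomorphism classes is trivial, and $\A(\lcm(R))$ is a one-point space, hence compact. So from now on one may assume $R$ is non-regular, which is the hypothesis under which Theorem~\ref{main cor of TFAE} is available.

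Next I would establish the key claim that $\sing(\widehat{R})\subseteq\{\m\widehat{R}\}$. Let $\q\in\sing(\widehat{R})$ and set $\p=\q\cap R$. The induced local homomorphism $R_\p\to\widehat{R}_\q$ is flat, and its closed fiber is a localization of the formal fiber $\kappa(\p)\otimes_R\widehat{R}$, which is regular by hypothesis; hence a localization of a regular ring, so regular. By \cite[Theorem~2.2.12(b)]{BH}, $\widehat{R}_\q$ is regular if and only if $R_\p$ is regular. Since $\widehat{R}_\q$ is not regular, $R_\p$ is not regular, i.e.\ $\p\in\sing(R)$; as $R$ has an isolated singularity, $\p=\m$, so $\q\supseteq\m\widehat{R}$, and since $\m\widehat{R}$ is the maximal ideal of the local ring $\widehat{R}$, we get $\q=\m\widehat{R}$. (Alternatively one can quote the analysis of $V=\{\p\cap R\mid\p\in\sing(\widehat{R})\}$ in Remark~\ref{non reg non trivial}(1): the hypothesis gives $\sing(R)=V\subseteq\{\m\}$, and a prime of $\widehat{R}$ contracting to $\m$ must equal $\m\widehat{R}$.) Thus $\widehat{R}$ has an isolated singularity, condition (4) of Theorem~\ref{main cor of TFAE} holds, and that theorem gives that $\A(\lcm(R))$ is compact.

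For the final sentence I would simply recall that if $R$ is quasi-excellent then by definition it is a G-ring, so its formal fibers are geometrically regular; in particular $\kappa(\p)\otimes_R\widehat{R}$ is regular for every prime $\p$ of $R$, and since an excellent ring is quasi-excellent, the ``in particular'' clause follows from the first assertion.

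I do not expect a serious obstacle here: essentially all of the mathematical content is already carried by Theorem~\ref{main cor of TFAE} together with the description of $V$ in Remark~\ref{non reg non trivial}(1). The only points that demand a little care are not overlooking the regular case (Theorem~\ref{main cor of TFAE} is stated only for non-regular $R$) and correctly transporting the isolated-singularity property from $R$ up to $\widehat{R}$ through the regular-formal-fibre condition, which is exactly what \cite[Theorem~2.2.12(b)]{BH} provides.
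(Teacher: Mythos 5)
Your proposal is correct and follows essentially the same route as the paper: deduce from the regular-formal-fiber hypothesis (via \cite[Theorem 2.2.12]{BH}) that $\widehat{R}$ has an isolated singularity and then invoke Theorem \ref{main cor of TFAE}(4), with the quasi-excellent case reduced to the G-ring property exactly as in the paper. Your only additions are spelling out the contraction argument for primes of $\sing(\widehat{R})$ and explicitly disposing of the regular case (where $\lcm(R)$ is trivial), a point the paper's one-line proof leaves implicit since Theorem \ref{main cor of TFAE} is stated for non-regular rings.
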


\begin{proof}
If $\kappa(\p)\otimes_R \widehat{R}$ is regular for any prime ideal $\p$ of $R$, then $\widehat{R}$ has an isolated singularity by \cite[Theorem 2.2.12]{BH}.
The assertion is a consequence of Theorem \ref{main cor of TFAE}.
\end{proof}

\section{Countable CM-representation type}

Let us call a local ring $R$ of finite (resp. countable) CM-representation type if there are only finitely (resp. countably) many isomorphism classes of indecomposable maximal Cohen--Macaulay modules over $R$.
It follows from \cite[Theorem 2.6]{AACE} and Lemma \ref{lemma AACE example1} that $\A(\lcm(R))$ is compact when $R$ is of finite CM-representation type.
Also, the condition (2) of Theorem \ref{main cor of TFAE} holds in the case of finite CM-representation type, however, it is unknown whether it holds for countable CM-representation types.
In this section, we investigate the compactness of in the case of countable CM-representation type following the ideas presented in \cite[Section 5]{AACE}.
To calculate examples, we prepare two lemmas.

\begin{lem}\label{lemma AACE example1}
Let $R$ be a local ring and $M$ a maximal Cohen--Macaulay $R$-module.
\begin{enumerate}[\rm(1)]
\item Suppose that $\Ann(M)\subseteq\Ann(N)$ for any indecomposable maximal Cohen--Macaulay $R$-module $N$.
Then $\Ann(M)=\Ann(\lcm(R))$.
\item If $R$ is Gorenstein, then $\Ann(M)=\Ann(\Omega_R M)$.
\end{enumerate}
\end{lem}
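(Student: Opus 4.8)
The plan is to handle the two parts separately, starting with the purely formal part (1) and then using the structure theory of Gorenstein rings for part (2).

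For part (1), the key observation is that every maximal Cohen--Macaulay $R$-module decomposes into a finite direct sum of indecomposable maximal Cohen--Macaulay modules, since $R$ is local and $\cm(R)$ is a Krull--Schmidt category. So for any $N \in \cm(R)$, write $N \cong \bigoplus_{j=1}^r N_j$ with each $N_j$ indecomposable maximal Cohen--Macaulay. By the standard behaviour of the annihilator ideal under direct sums (use Lemma \ref{DeyT3.8} together with the fact that $\lhom_R(\bigoplus N_j, \bigoplus N_j) \cong \bigoplus_{j,k} \lhom_R(N_j, N_k)$, so $\Ann(N) = \bigcap_{j} \Ann(N_j)$ up to the usual remark that off-diagonal Hom's only enlarge the intersection), we get $\Ann(M) \subseteq \bigcap_j \Ann(N_j) = \Ann(N)$. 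Taking the intersection over all $N \in \cm(R)$ shows $\Ann(M) \subseteq \Ann(\lcm(R))$, and the reverse inclusion $\Ann(\lcm(R)) \subseteq \Ann(M)$ is immediate from the definition of $\Ann(\lcm(R))$ as an intersection over all objects. Hence equality.

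For part (2), recall that over a Gorenstein local ring $R$, the stable category $\lcm(R)$ is triangulated with suspension given by the cosyzygy, and in particular $\Omega_R$ is an equivalence on $\lcm(R)$ with quasi-inverse $\Omega_R^{-1}$. Consequently $\lhom_R(\Omega_R M, \Omega_R M) \cong \lhom_R(M, M)$ as $R$-modules (the isomorphism is $R$-linear because $\Omega_R$ is an $R$-linear functor). Therefore $\Ann(M) = \ann_R \lhom_R(M,M) = \ann_R \lhom_R(\Omega_R M, \Omega_R M) = \Ann(\Omega_R M)$. Alternatively, and perhaps more in the spirit of Lemma \ref{DeyT3.8}, one can argue directly: $\Ann(M) = \ann_R \Ext_R^1(M, \Omega_R M)$, and since $M$ is maximal Cohen--Macaulay over a Gorenstein ring one has $\Ext_R^i(M, R) = 0$ for $i > 0$, so the long exact sequence coming from a short exact sequence $0 \to \Omega_R M \to F \to M \to 0$ gives natural isomorphisms $\Ext_R^{i+1}(M, -) \cong \Ext_R^i(\Omega_R M, -)$ in positive degrees, which combined with Lemma \ref{DeyT3.8} applied to both $M$ and $\Omega_R M$ yields the claimed equality of annihilator ideals.

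I do not anticipate a serious obstacle here; both parts are soft. The one point requiring a little care is making sure the identification $\Ann(N) = \bigcap_j \Ann(N_j)$ in part (1) is stated correctly, since $\lhom_R(N,N)$ contains the off-diagonal morphism modules and one should only claim $\Ann(N) \subseteq \bigcap_j \Ann(N_j)$ (which is all that is needed) rather than equality; and in part (2) one should make explicit that the Gorenstein hypothesis is exactly what makes $\Omega_R$ an autoequivalence of $\lcm(R)$, so that no information is lost. Both of these are routine to pin down.
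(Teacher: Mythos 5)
Your proposal is correct and follows essentially the same route as the paper: part (1) via decomposition into indecomposables together with the identity $\Ann(X\oplus Y)=\Ann(X)\cap\Ann(Y)$ (the paper cites the proof of \cite[Lemma 2.2]{AACE} for this), and part (2) via the fact that $\Omega_R$ is an $R$-linear autoequivalence of $\lcm(R)$ when $R$ is Gorenstein. One small correction to your closing caveat: the inclusion your argument actually needs is $\bigcap_j \Ann(N_j)\subseteq\Ann(N)$, not the (trivial) reverse one you single out; it does hold, since if $a\cdot\mathrm{id}_{N_j}$ factors through a projective for every $j$ then so does $a\cdot\mathrm{id}_N$, and hence $af=f\circ(a\cdot\mathrm{id}_N)$ for every $f\in\lhom_R(N,N)$, so the full equality $\Ann(N)=\bigcap_j\Ann(N_j)$ that you asserted in the body of the argument is both true and exactly what is required.
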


\begin{proof}
The proof of \cite[Lemma 2.2]{AACE} shows $\Ann(X\oplus Y)=\Ann(X)\cap\Ann(Y)$ for all maximal Cohen--Macaulay $R$-modules $X,Y$, which implies (1) holds.
The assertion (2) follows since the functor $\Omega_R:\lcm(R)\to \lcm(R)$ is equivalence if $R$ is Gorenstein.
\end{proof}

The following lemma is used in \cite[Section 5]{AACE}.
We denote by $I_n$ the identity matrix of size $n$.

\begin{lem}\label{lemma AACE example2}
Let $R$ be a Gorenstein local ring, and $r$ an element of $R$.
Let $\cdots \xrightarrow{\phi} R^{n} \xrightarrow{\psi} R^{n} \xrightarrow{\phi} R^{n} \xrightarrow{\psi} \cdots$ be an exact sequence of free $R$-modules, and $M=\cok\phi$.
\begin{enumerate}[\rm(1)]
\item The element $r$ belongs to $\Ann(M)$ if and only if there are $R$-homomorphisms $\alpha,\beta:R^{n} \to R^{n}$ such that $\phi\circ\alpha+\beta\circ\psi=r I_n$.
\item Suppose that one of the conditions in {\rm (1)} holds.
If $\phi$ and $\psi$ are given by matrices $(x_{ij})$ and $(y_{ij})$, then $r$ belongs to $(x_{k1}, \ldots, x_{kn}, y_{1k}, \ldots, y_{nk})$ for all $1\le k\le n$.
\end{enumerate}
\end{lem}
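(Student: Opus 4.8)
The plan is to translate the condition $r\in\Ann(M)$ into a matrix identity by unwinding the definition along the $2$-periodic resolution; this gives (1), and (2) then falls out by inspecting a single diagonal entry. So I would prove (1) first.

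For (1): recall $\Ann(M)=\ann_R\lhom_R(M,M)$, so $r\in\Ann(M)$ if and only if $r\cdot\mathrm{id}_M$ factors through a free module (the nontrivial direction being that $rf=(r\cdot\mathrm{id}_M)\circ f$ for every $f\in\Hom_R(M,M)$). Over the Gorenstein ring $R$ the module $M$ is maximal Cohen--Macaulay, so Lemma \ref{lemma AACE example1}(2) gives $\Ann(M)=\Ann(\Omega_R M)$, and $\Omega_R M\cong\cok\psi$ up to a free summand; hence it suffices to decide when $r\cdot\mathrm{id}_{\cok\psi}$ factors through a free module. Writing $\varpi\colon R^n\to\cok\psi$ for the canonical surjection, with $\ker\varpi=\image\psi$, and using that any homomorphism into $\cok\psi$ factoring through a free module factors through $\varpi$ (lift it along the surjection $\varpi$), one sees that $r\cdot\mathrm{id}_{\cok\psi}$ factors through a free module exactly when there is $h\colon R^n\to R^n$ with $\varpi h=r\varpi$ and $h\psi=0$. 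A short chase then finishes it: the first equation forces $\image(h-rI_n)\subseteq\ker\varpi=\image\psi$, hence $h=rI_n+\psi\alpha$ for some $\alpha$ (lift $h-rI_n$ through $\psi\colon R^n\twoheadrightarrow\image\psi$); substituting into $h\psi=0$ gives $\psi(rI_n+\alpha\psi)=0$, so $\image(rI_n+\alpha\psi)\subseteq\ker\psi=\image\phi$ by exactness, whence $rI_n+\alpha\psi=\phi\beta$ for some $\beta$. Every step reverses, since $R^n$ is projective, the complex is exact, and $\varpi\psi=0$. Renaming, $r\in\Ann(M)$ if and only if $\phi\alpha+\beta\psi=rI_n$ for some $\alpha,\beta$.

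For (2): granting $\phi\alpha+\beta\psi=rI_n$ with $\alpha=(\alpha_{ij})$, $\beta=(\beta_{ij})$, compare the $(k,k)$-entries of the two sides to get $\sum_j x_{kj}\alpha_{jk}+\sum_j\beta_{kj}y_{jk}=r$. The first sum lies in $(x_{k1},\dots,x_{kn})$ and the second in $(y_{1k},\dots,y_{nk})$, so $r\in(x_{k1},\dots,x_{kn},y_{1k},\dots,y_{nk})$; that is all that is needed.

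There is no essential obstacle here: this is the matrix-factorization bookkeeping of \cite[Section 5]{AACE}. The one delicate point is the direction in which the periodic resolution is read: chasing the free cover of $M=\cok\phi$ itself produces the ``transposed'' identity $\psi\delta+\gamma\phi=rI_n$, so in order to land exactly on $\phi\alpha+\beta\psi=rI_n$ one must pass to the syzygy $\cok\psi$ as above (equivalently, convert between the two identities by right-multiplying by $\psi$, respectively $\phi$, and using $\phi\psi=\psi\phi=0$). One should also record explicitly that $M$ is maximal Cohen--Macaulay, which is automatic since $M$ is the cokernel in a doubly infinite acyclic complex of free modules over the Gorenstein ring $R$, so that Lemma \ref{lemma AACE example1}(2) applies.
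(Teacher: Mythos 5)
Your argument is correct, and for part (1) it takes a genuinely different route from the paper's. The paper proves ``identity $\Rightarrow r\in\Ann(M)$'' by reading $\alpha,\beta$ as a null-homotopy: applying $\Hom_R(-,M)$, it sees that $r$ kills $\Ext_R^2(M,M)\cong\Ext_R^1(\Omega M,\Omega^2 M)$ and concludes via Lemma \ref{DeyT3.8} and Lemma \ref{lemma AACE example1}(2); for the converse it factors $r\cdot\mathrm{id}_M$ through the cover $\pi\colon R^n\to M$ and then extends the resulting map $\gamma\colon M\to R^n$ along the inclusion $\iota\colon M\to R^n$ (where $\psi=\iota\pi$), a step that uses the Gorenstein vanishing $\Ext_R^1(\cok\iota,R)=0$ for the maximal Cohen--Macaulay module $\cok\iota$. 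You instead pass to the syzygy $\cok\psi$ via Lemma \ref{lemma AACE example1}(2) and settle both implications simultaneously by a lifting chase on the presentation $R^n\xrightarrow{\psi}R^n\xrightarrow{\varpi}\cok\psi\to0$, using only projectivity of free modules, exactness, and $\varpi\psi=0=\psi\phi$; this is more elementary (no Ext computation, no appeal to $\Ext^1$-vanishing against $R$), at the cost of routing everything through the syzygy --- and, as you observe, even that detour could be dropped, since right-multiplying the transposed identity $\psi\delta+\gamma\phi=rI_n$ by $\psi$ and lifting through $\ker\psi=\image\phi$ converts it into $\phi\alpha+\beta\psi=rI_n$. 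When writing this up, spell out the ``every step reverses'' claim, since the converse is not a literal reversal: given $\phi\alpha+\beta\psi=rI_n$ one sets $h=rI_n-\psi\beta$ and checks $\varpi h=r\varpi$ (from $\varpi\psi=0$) and $h\psi=r\psi-\psi(rI_n-\phi\alpha)=\psi\phi\alpha=0$; also record, as you do, that $M$ and $\cok\psi$ are maximal Cohen--Macaulay because they are infinite syzygies in the doubly infinite exact complex of free modules, which is what legitimizes Lemma \ref{lemma AACE example1}(2), and that $\Ann$ is insensitive to the free summands by which $\cok\psi$ may differ from $\Omega_RM$. Part (2) is exactly the paper's argument: compare the $(k,k)$ entries of $\phi\alpha+\beta\psi=rI_n$.
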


\begin{proof}
(1) Suppose that there are $R$-homomorphisms $\alpha,\beta:R^{n} \to R^{n}$ such that $\phi\circ\alpha+\beta\circ\psi=r I_n$.
Applying $\Hom_R(-,M)$ to the diagram 
\[
  \xymatrix@C=30pt@R=20pt{
    \cdots \ar[r]
    & R^{n} \ar[r]^\phi
    & R^{n} \ar[r]^\psi \ar[d]_{r} \ar[ld]_\alpha
    & R^{n} \ar[r] \ar[ld]_\beta
    & \cdots \\
    \cdots \ar[r]
    & R^{n} \ar[r]^\phi
    & R^{n} \ar[r]^\psi   
    & R^{n} \ar[r] 
    & \cdots, \\
  }
\]
we get a diagram
\[
  \xymatrix@C=30pt@R=20pt{
    \cdots \ar[r]
    & M^{n} \ar[r]^{^t\psi}
    & M^{n} \ar[r]^{^t\phi} \ar[d]_{r} \ar[ld]_{^t\beta}
    & M^{n} \ar[r] \ar[ld]_{^t\alpha}
    & \cdots \\
    \cdots \ar[r]
    & M^{n} \ar[r]^{^t\psi}
    & M^{n} \ar[r]^{^t\phi} 
    & M^{n} \ar[r] 
    & \cdots. \\
  }
\]
(The symbol $^t(-)$ denotes transpose.)
Then $\ker(^t\phi)/\image(^t\psi)\cong\Ext_R^2(M,M)\cong\Ext_R^1(\Omega M,\Omega^2 M)$ since $M\cong\Omega^2 M$.
The equality ${^t\alpha}\circ{^t\phi}+{^t\psi}\circ{^t\beta}=r I_n$ implies that $r$ annihilates the homology $\ker(^t\phi)/\image(^t\psi)$.
It follows from Lemmas \ref{DeyT3.8} and \ref{lemma AACE example1} that $r$ belongs to $\Ann(M)=\Ann(\Omega M)=\ann_R \Ext_R^1(\Omega M,\Omega^2 M)$.

Suppose $r\in\Ann(M)$.
Let $\pi:R^n\to M$ be the natural surjective, $\iota:M\to R^n$ the natural injection, and $\tilde{r}$ the multiplication by $r$ on $M$.
Since $\tilde{r}$ factors through a projective module and $\pi$ is surjective, there is an $R$-homomorphism $\gamma:M\to R^n$ such that $\tilde{r}=\pi\circ\gamma$.
We get an $R$-homomorphism $\alpha:R^n\to R^n$ such that $r I_n-\gamma\circ\pi=\phi\circ\alpha$ as $\image(r I_n-\gamma\circ\pi)$ is contained in $\ker\pi=\image\phi$.
Now $\cok\iota$ is maximal Cohen--Macaulay, which means that $\Hom_R(\iota, R^n)$ is surjective.
There is an $R$-homomorphism $\beta:R^n\to R^n$ such that $\gamma=\beta\circ\iota$.
Then we have $r I_n=\phi\circ\alpha+\gamma\circ\pi=\phi\circ\alpha+\beta\circ\psi$ by the equality $\psi=\iota\circ\pi$.

(2) Put $\alpha=(a_{ij})$ and $\beta=(b_{ij})$.
Then $r=\sum_{i=1}^n x_{ki} a_{ik}+\sum_{i=1}^n b_{ki} y_{ik}$ for all $1\le k\le n$.
\end{proof} 

In the rest of this section, let $k$ be an uncountable algebraically closed field having a characteristic different from 2.
For any complete local hypersurface $R$ with coefficient field $k$, it has countable CM-representation type if and only if it is isomorphic to $k \llbracket x_0, x_1, \ldots, x_d\rrbracket /(f)$, where $f$ is one of the following:
$$
(A_\infty)\ x_0^2+x_2^2+\cdots+x_d^2,\ {\rm or}\  (D_\infty)\ x_0^2 x_1+x_2^2+\cdots+x_d^2;
$$
see \cite[Theorem B]{BGS} or \cite[Theorem 14.16]{LW}.
The purpose of this section is to use this fact to prove Theorem \ref{main example countable}.
According to Kn\"{o}rrer's periodicity \cite[Theorem 3.1]{Kn} (see also \cite[Theorem 8.33]{LW}), the cases of one and two dimensions are essential.
Therefore, in the following, we calculate the cases for one and two dimensions.
(However, in the proof of the Theorem \ref{main example countable}, it is unnecessary for the case of dimension two.)
First, we deal with the case where $f=(A_\infty)$.

\begin{prop}\label{A type is compact}
\begin{enumerate}[\rm(1)]
\item Let $R=k \llbracket x, y\rrbracket/(x^2)$. Then $\Ann(\lcm(R))=\Ann(R/xR)=xR$ hold. 
\item Let $R=k \llbracket x, y, z\rrbracket/(x^2+z^2)$, and let $i$ be a element of $k$ such that $i^2+1=0$.
Then the equalities $\Ann(\lcm(R))=\Ann(R/(z-ix)R)=\Ann(R/(z+ix)R)=(x,z)R$ hold. 
\end{enumerate}
\noindent In particular, in both cases {\rm (1)} and {\rm (2)}, $\A(\lcm(R))$ is compact.
\end{prop}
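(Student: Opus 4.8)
The plan is to identify $\Ann(\lcm(R))$ with the annihilator of the displayed rank-one module $M$ (take $M=R/xR$ in (1) and $M=R/(z-ix)R$ in (2)): once this is shown, $M$ is a minimum element for the preorder on $\lcm(R)$, so $\A(\lcm(R))$ is compact by \cite[Theorem 2.6]{AACE}. A direct computation of the singular locus combined with Theorem \ref{main result general} already gives $\sqrt{\Ann(\lcm(R))}=xR$ in (1) and $\sqrt{\Ann(\lcm(R))}=(x,z)R$ in (2); the proposition sharpens these to exact equalities realized by a single module. Since $M\in\cm(R)$, the inclusion $\Ann(\lcm(R))\subseteq\Ann(M)$ is automatic, so the real task is the reverse one. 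I will (a) compute $\Ann(M)$ exactly using Lemma \ref{lemma AACE example2}, (b) invoke the classification of indecomposable maximal Cohen--Macaulay $R$-modules --- available because $R$ has countable CM-representation type --- and, for each indecomposable $N$, produce matrices $\alpha,\beta$ as in Lemma \ref{lemma AACE example2}(1) witnessing $\Ann(M)\subseteq\Ann(N)$, and (c) conclude $\Ann(\lcm(R))=\Ann(M)$ by Lemma \ref{lemma AACE example1}(1).

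For (1), $M=R/xR$ has the free resolution $\cdots\xrightarrow{x}R\xrightarrow{x}R\to M\to0$. As multiplication by $x$ on $M$ is the zero map we get $x\in\Ann(M)$, while Lemma \ref{lemma AACE example2}(2) applied to $\phi=\psi=(x)$ forces $\Ann(M)\subseteq(x)$, so $\Ann(M)=xR$. By \cite[Theorem B]{BGS} (see also \cite[Theorem 14.16]{LW}) the indecomposable maximal Cohen--Macaulay $R$-modules are $R$, $M=R/xR$, and $M_n=\cok\left(\begin{smallmatrix}x&y^n\\0&-x\end{smallmatrix}\right)$ for $n\ge1$, the last with matrix factorization $\phi=\psi=\left(\begin{smallmatrix}x&y^n\\0&-x\end{smallmatrix}\right)$ (indeed $\phi^2=x^2I_2$). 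Trivially $x\in\Ann(R)$ and $x\in\Ann(M)$, and for $M_n$ the matrices $\alpha=\beta=\tfrac12\left(\begin{smallmatrix}1&0\\0&-1\end{smallmatrix}\right)$ satisfy $\phi\alpha+\beta\psi=xI_2$, whence $x\in\Ann(M_n)$ by Lemma \ref{lemma AACE example2}(1). So $xR=\Ann(M)\subseteq\Ann(N)$ for every indecomposable $N$, and Lemma \ref{lemma AACE example1}(1) yields $\Ann(\lcm(R))=\Ann(M)=xR$.

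For (2), write $x^2+z^2=(z-ix)(z+ix)$ (valid since $i^2+1=0$); the elements $z-ix$, $z+ix$, $y$ form a regular system of parameters, so $R\cong k\llbracket u,v,y\rrbracket/(uv)$ with $u=z-ix$, $v=z+ix$, and $(u,v)R=(x,z)R$ (as $2$ and $i$ are units). With $M=R/uR=R/(z-ix)R$, resolved $2$-periodically by $\phi=(u)$, $\psi=(v)$, Lemma \ref{lemma AACE example2}(2) gives $\Ann(M)\subseteq(u,v)R=(x,z)R$, while $u\in\Ann(M)$ is clear and $v\in\Ann(M)$ holds by Lemma \ref{lemma AACE example2}(1) with $\alpha=0$, $\beta=1$; hence $\Ann(M)=(x,z)R$, and likewise $\Ann(R/(z+ix)R)=(x,z)R$ via the automorphism interchanging $u$ and $v$. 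Besides $R$, $R/uR$, $R/vR$, the classification over $k\llbracket u,v,y\rrbracket/(uv)$ supplies the indecomposables $N_n^+=\cok\left(\begin{smallmatrix}u&y^n\\0&v\end{smallmatrix}\right)$ and $N_n^-=\cok\left(\begin{smallmatrix}v&y^n\\0&u\end{smallmatrix}\right)$ for $n\ge1$, with matrix-factorization partners $\left(\begin{smallmatrix}v&-y^n\\0&u\end{smallmatrix}\right)$ and $\left(\begin{smallmatrix}u&-y^n\\0&v\end{smallmatrix}\right)$. For $N_n^+$ one computes $\phi+\psi=(u+v)I_2=2zI_2$ and $\phi\left(\begin{smallmatrix}1&0\\0&-1\end{smallmatrix}\right)-\left(\begin{smallmatrix}1&0\\0&-1\end{smallmatrix}\right)\psi=(u-v)I_2=-2ixI_2$, so $\alpha=\beta=\tfrac12I_2$ gives $\phi\alpha+\beta\psi=zI_2$ and $\alpha=\tfrac i2\left(\begin{smallmatrix}1&0\\0&-1\end{smallmatrix}\right)=-\beta$ gives $\phi\alpha+\beta\psi=xI_2$; thus $x,z\in\Ann(N_n^+)$, i.e. $(x,z)R\subseteq\Ann(N_n^+)$, and $N_n^-$ is handled by the $u\leftrightarrow v$ symmetry (which fixes $(x,z)R$). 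Therefore $\Ann(\lcm(R))=\Ann(M)=(x,z)R$.

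In both cases $M$ satisfies $\Ann(M)=\Ann(\lcm(R))$, so $M$ is a minimum element of the preorder and $\A(\lcm(R))$ is compact by \cite[Theorem 2.6]{AACE}. I expect the main obstacle to be step (b): correctly reading off the classification of indecomposable maximal Cohen--Macaulay modules over these two rings and translating it into the matrix factorizations used above (in particular, making sure the lists are complete). Granting that, every annihilator computation reduces to the short $2\times2$ matrix identities displayed here, and step (a) is immediate from Lemma \ref{lemma AACE example2}.
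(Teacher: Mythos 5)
Your proposal is correct and follows essentially the same route as the paper: invoke the classification of indecomposable maximal Cohen--Macaulay modules, compute/bound annihilators via the matrix-factorization criterion of Lemma \ref{lemma AACE example2}, and conclude with Lemma \ref{lemma AACE example1}(1) and \cite[Theorem 2.6]{AACE}. The only (harmless) differences are the coordinate change $u=z-ix$, $v=z+ix$ in (2) and that you verify only the inclusions $xR\subseteq\Ann(N)$ resp.\ $(x,z)R\subseteq\Ann(N)$ for each indecomposable $N$, whereas the paper determines each $\Ann(\cok\phi_n)$, $\Ann(\cok\psi_n^{\pm})$ exactly (information it reuses later in Remark \ref{A type non compact cm0}).
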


\begin{proof}
(1) All the nonisomorphic nonfree indecomposable maximal Cohen--Macaulay $R$-modules are $R/xR$ and $\cok \phi_n$ for positive integers $n$, where 
$\phi_n=
\begin{pmatrix}
   x & y^n \\
   0 & -x
\end{pmatrix}$
by \cite[Proposition 4.1]{BGS}.
The ring $R$ is Gorenstein, and the complex $(\cdots \xrightarrow{x} R \xrightarrow{x} R \xrightarrow{x} \cdots)$ is exact.
By Lemma \ref{lemma AACE example2}(1), we get $\Ann(R/xR)=xR$.
Fix $n\ge 1$.
There is an exact sequence $(\cdots \xrightarrow{\phi_n} R^2 \xrightarrow{\phi_n} R^2 \xrightarrow{\phi_n} \cdots)$ of free $R$-modules.
Lemma \ref{lemma AACE example2}(2) deduces $\Ann(\cok \phi_n)\subseteq (x, y^n)R$.
On the other hand, there are equalities
$$
\phi_n
\begin{pmatrix}
   0 & 0 \\
   0 & -1
\end{pmatrix}
+
\begin{pmatrix}
   1 & 0 \\
   0 & 0
\end{pmatrix}
\phi_n
=xI_2
\ \ {\rm and}\ \
\phi_n
\begin{pmatrix}
   0 & 0 \\
   1 & 0
\end{pmatrix}
+
\begin{pmatrix}
   0 & 0 \\
   1 & 0
\end{pmatrix}
\phi_n
=y^n I_2,
$$
which means $\Ann(\cok \phi_n)=(x, y^n)R$ by Lemma \ref{lemma AACE example2}(1).
The assertion follows from Lemma \ref{lemma AACE example1}(1).

(2) For each positive integer $n$, we set 
$\psi_n^{+}=
\begin{pmatrix}
   z-ix & y^n \\
   0 & z+ix
\end{pmatrix}$
and 
$\psi_n^{-}=
\begin{pmatrix}
   z+ix & -y^n \\
   0 & z-ix
\end{pmatrix}$.
Then $R/(z-ix)R$, $R/(z+ix)R$, $\cok \psi_n^{+}$ and $\cok \psi_n^{-}$ for $n\ge 1$ are all the nonisomorphic nonfree indecomposable maximal Cohen--Macaulay $R$-modules; see \cite[Proposition 14.17]{LW} (or \cite[Theorem 5.3]{BD}).
The ring $R$ is Gorenstein, and the complex $(\cdots \xrightarrow{z+ix} R \xrightarrow{z-ix} R \xrightarrow{z+ix} \cdots)$ is exact.
By Lemma \ref{lemma AACE example2}(1), we have $\Ann(R/(z-ix)R)=\Ann(R/(z+ix)R)=(z-ix,z+ix)R=(x,z)R$.
Fix $n\ge 1$.
There is an exact sequence $(\cdots \xrightarrow{\psi_n^{-}} R^2 \xrightarrow{\psi_n^{+}} R^2 \xrightarrow{\psi_n^{-}} \cdots)$ of free $R$-modules.
It follows from Lemma \ref{lemma AACE example2}(2) that $\Ann(\cok \psi_n^{+})$ and $\Ann(\cok \psi_n^{-})$ are contained in $(z-ix,z+ix,y^n)R=(x,y^n, z)R$.
On the other hand, there are equalities
\begin{align*}
\psi_n^{+}
\begin{pmatrix}
   0 & 0 \\
   0 & 1
\end{pmatrix}
+
\begin{pmatrix}
   1 & 0 \\
   0 & 0
\end{pmatrix}
\psi_n^{-}
=(z+ix),\ \
& \psi_n^{+}
\begin{pmatrix}
   1 & 0 \\
   0 & 0
\end{pmatrix}
+
\begin{pmatrix}
   0 & 0 \\
   0 & 1
\end{pmatrix}
\psi_n^{-}
=(z-ix) I_2, \\
 {\rm and} \ \
& \psi_n^{+}
\begin{pmatrix}
   0 & 0 \\
   1 & 0
\end{pmatrix}
+
\begin{pmatrix}
   0 & 0 \\
   -1 & 0
\end{pmatrix}
\psi_n^{-}
=y^n I_2,
\end{align*}
which means $\Ann(\cok \psi_n^{+})=\Ann(\cok \psi_n^{-})=(x,y^n, z)R$ by Lemmas \ref{lemma AACE example1}(2) and \ref{lemma AACE example2}(1).
Similar to the proof of (1), Lemma \ref{lemma AACE example1}(1) concludes that the assertion holds.
\end{proof} 

\begin{rem}\label{A type non compact cm0}
Let us verify that the nine conditions of Theorem \ref{main TFAE} do not hold for $R$ in Proposition \ref{A type is compact}(1).
Put $\m=(x,y)R$.
Note that $R$ is complete and $\sing(R)=\V(xR)$, which means $R$ does not have an isolated singularity.
Proposition \ref{A type is compact}(1) asserts that $\Ann(\lcm(R))$ is not $\m$-primary.
All the nonisomorphic nonfree indecomposable maximal Cohen--Macaulay $R$-modules which are locally free on the punctured spectrum are $\cok \phi_n$ for $n\ge 1$ since $\Ann(R/xR)=xR$ and $\Ann(\cok \phi_n)=(x, y^n)R$.
Then 
$$
\Ann(\lcm_0(R))=\bigcap_{n\ge 1} \Ann(\cok \phi_n)=\bigcap_{n\ge 1} (x, y^n)R=xR.
$$
So, $\Ann(\lcm_0(R))$ is also not $\m$-primary, and there is no $0\ne M\in\lcm_0(R)$ such that $\Ann(\lcm_0(R))=\Ann(M)$ as $M$ is isomorphic to a finite direct sum of $\cok \phi_n$. 
For any positive integer $n$, we have $\Omega^{n-1}(R/xR)\cong R/xR$ and hence
$\Ext_R^n(R/xR, R/xR)\cong\Ext_R^1(R/xR, R/xR)$, which implies
$$
\ca^n(R) \subseteq \ann_R \Ext_R^n(R/xR, R/xR)= \ann_R \Ext_R^1(R/xR, R/xR)=\Ann(R/xR)=xR.
$$
We see that $\ca(R)$ is contained in $xR$ and is not $\m$-primary.
(In general, for a Gorenstein local ring $R$, $\ca(R)=\Ann(\lcm(R))$; see \cite[Lemma 2.3]{E} for instance.)
A similar argument for the rings in Proposition \ref{A type is compact}(2) and Proposition \ref{D type is compact} shows that the conditions of Theorem \ref{main TFAE} are not satisfied.
\end{rem}

Next, we consider the case where $f=(D_\infty)$.

\begin{prop}\label{D type is compact}
\begin{enumerate}[\rm(1)]
\item Let $R=k \llbracket x, y\rrbracket/(x^2 y)$. Then $\Ann(\lcm(R))=
\Ann(R/xR\oplus R/y R)=(x^2,xy)R$. 
\item Let $R=k \llbracket x, y, z\rrbracket/(x^2 y+z^2)$.
Consider the following two $R$-homomorphisms
$$
\alpha=
\begin{pmatrix}
   z & -y \\
   x^2 & z
\end{pmatrix}
\ \ {\rm and} \ \
\beta=
\begin{pmatrix}
   z & -xy \\
   x & z
\end{pmatrix}.
$$
Then the equalities $\Ann(\lcm(R))=\Ann(\cok\alpha\oplus\cok\beta)=(x^2, xy,z)R$ hold. 
\end{enumerate}
\noindent In particular, in both cases {\rm (1)} and {\rm (2)}, $\A(\lcm(R))$ is compact.
\end{prop}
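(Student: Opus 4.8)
The plan is to follow the template of the proof of Proposition \ref{A type is compact}: invoke the classification of indecomposable nonfree maximal Cohen--Macaulay modules over each of the two rings, compute the stable annihilator of every such module via Lemmas \ref{lemma AACE example1} and \ref{lemma AACE example2} (both rings are hypersurfaces, hence Gorenstein, so these apply), and then intersect, using Lemma \ref{lemma AACE example1}(1) to identify the result with $\Ann(\lcm(R))$ and \cite[Theorem 2.6]{AACE} to deduce compactness.

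For (1), over $R=k\llbracket x,y\rrbracket/(x^2y)$ the indecomposable nonfree maximal Cohen--Macaulay modules are, by \cite{BGS} (see also \cite{LW}), the modules $R/xR$ and $R/yR$, finitely many further cyclic modules such as $R/x^2R$ and $R/xyR$, and two countable families $\{\cok\phi_n\}_{n\ge1}$, $\{\cok\psi_n\}_{n\ge1}$ of cokernels of explicit $2\times2$ matrix factorizations of $x^2y$. From the factorizations $x^2y=x\cdot(xy)=(x^2)\cdot y=(xy)\cdot x$, Lemma \ref{lemma AACE example2}(1) with $n=1$ gives $\Ann(R/xR)=xR$, $\Ann(R/yR)=(x^2,y)R$, $\Ann(R/x^2R)=(x^2,y)R$ and $\Ann(R/xyR)=xR$; a short computation in $R$ then shows $\Ann(R/xR\oplus R/yR)=xR\cap(x^2,y)R=(x^2,xy)R$, and each of the four cyclic modules has annihilator containing $(x^2,xy)R$. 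For the families I would feed the defining matrices into Lemma \ref{lemma AACE example2}: part (2) bounds $\Ann(\cok\phi_n)$ above by the ideal of its entries, while part (1), together with Lemma \ref{lemma AACE example1}(2), produces for each generator $r$ of that ideal explicit $2\times2$ matrices $\alpha,\beta$ with $\phi_n\alpha+\beta\psi_n=rI_2$; the upshot is $\Ann(\cok\phi_n)=(x,y^n)R$ (and likewise for $\cok\psi_n$), so $\bigcap_{n\ge1}\Ann(\cok\phi_n)=xR\supseteq(x^2,xy)R$. Hence $(x^2,xy)R\subseteq\Ann(N)$ for every indecomposable nonfree maximal Cohen--Macaulay $N$, so Lemma \ref{lemma AACE example1}(1) yields $\Ann(\lcm(R))=\Ann(R/xR\oplus R/yR)=(x^2,xy)R$, and $\A(\lcm(R))$ is compact because the isomorphism class of $R/xR\oplus R/yR$ is a minimum for the preorder, by \cite[Theorem 2.6]{AACE}.

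Part (2), over the two-dimensional hypersurface $R=k\llbracket x,y,z\rrbracket/(x^2y+z^2)$, runs in parallel. The matrix factorization partners are $\alpha'=\left(\begin{smallmatrix}z&y\\-x^2&z\end{smallmatrix}\right)$ and $\beta'=\left(\begin{smallmatrix}z&xy\\-x&z\end{smallmatrix}\right)$, with $\alpha\alpha'=\alpha'\alpha=\beta\beta'=\beta'\beta=(x^2y+z^2)I_2$. Lemma \ref{lemma AACE example2}(2) gives $\Ann(\cok\alpha)\subseteq(x^2,y,z)R$ and $\Ann(\cok\beta)\subseteq(x,z)R$, and the reverse inclusions come from Lemma \ref{lemma AACE example2}(1): the identities $\alpha+\alpha'=\beta+\beta'=2zI_2$ handle the generator $z$ (this is where $\operatorname{char}k\ne2$ enters), and a handful of further explicit $\alpha,\beta$ handle $x^2,xy$ for $\cok\alpha$ and $x$ for $\cok\beta$. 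Thus $\Ann(\cok\alpha)=(x^2,y,z)R$ and $\Ann(\cok\beta)=(x,z)R$; reducing modulo $z$ identifies $R/zR$ with $k\llbracket x,y\rrbracket/(x^2y)$ and reduces $\Ann(\cok\alpha)\cap\Ann(\cok\beta)$ to the plane-curve computation of part (1), giving $(x^2,y,z)R\cap(x,z)R=(x^2,xy,z)R=\Ann(\cok\alpha\oplus\cok\beta)$. I would then invoke the classification of indecomposable maximal Cohen--Macaulay modules over this $D_\infty$ surface singularity \cite{LW} — the remaining small modules together with two countable families, the latter also obtainable from the plane-curve families of (1) by Kn\"orrer's periodicity \cite{Kn} — and check exactly as before that all their annihilators contain $(x^2,xy,z)R$; Lemma \ref{lemma AACE example1}(1) and \cite[Theorem 2.6]{AACE} then finish the proof.

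I expect the main obstacle to be the uniform handling of the two countable families in each part: one must exhibit the matrix identities of Lemma \ref{lemma AACE example2}(1) in a way that works for all $n$ at once, or at least argue directly that the intersection over $n$ of the resulting annihilators cannot fall below $(x^2,xy)R$, respectively $(x^2,xy,z)R$; and one must lean on the completeness of the classifications in \cite{BGS,LW} to be sure that no stray indecomposable has a strictly smaller annihilator. Everything else is routine $2\times2$ matrix algebra over a power series ring.
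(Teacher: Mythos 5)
Your overall strategy is exactly the paper's: list the indecomposable nonfree maximal Cohen--Macaulay modules from \cite{BGS} (resp.\ \cite{LW}), compute or bound the stable annihilator of each via Lemmas \ref{lemma AACE example1} and \ref{lemma AACE example2}, observe that every one of them contains $(x^2,xy)R$ (resp.\ $(x^2,xy,z)R$), and conclude with Lemma \ref{lemma AACE example1}(1) and \cite[Theorem 2.6]{AACE}. The cyclic-module computations and the identification $\Ann(\cok\alpha)\cap\Ann(\cok\beta)=(x^2,y,z)R\cap(x,z)R=(x^2,xy,z)R$ agree with the paper.

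One step of your plan would fail as literally stated. Over $k\llbracket x,y\rrbracket/(x^2y)$ there are \emph{four} one-parameter families (two syzygy pairs), in the paper's notation $\cok\alpha_n,\cok\beta_n$ and $\cok\gamma_n,\cok\delta_n$ with
$\gamma_n=\left(\begin{smallmatrix} x & y^n \\ 0 & -x\end{smallmatrix}\right)$, $\delta_n=\left(\begin{smallmatrix} xy & y^{n+1} \\ 0 & -xy\end{smallmatrix}\right)$,
and for the second pair your expectation that Lemma \ref{lemma AACE example2}(1) realizes \emph{every} generator of the entry ideal is wrong: $x\notin\Ann(\cok\gamma_n)$, so no identity $\gamma_n A+B\delta_n=xI_2$ exists. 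In fact $\Ann(\cok\gamma_n)=\Ann(\cok\delta_n)=(x^2,xy,y^{n+1})R\subsetneq(x,y^{n+1})R$ (the paper proves the strict inclusion in Remark \ref{Strict equality sign}), so the equality ``$\Ann(\cok\psi_n)=(x,y^n)R$'' you assert for the second family is false; the same phenomenon occurs for $\cok\delta_n^{\pm}$ in dimension two, where the annihilator is $(x^2,xy,y^{n+1},z)R$. This does not sink the argument --- all that Lemma \ref{lemma AACE example1}(1) needs is that each of these annihilators contains $(x^2,xy)R$, resp.\ $(x^2,xy,z)R$, and the lower bounds $(x^2,xy,y^{n+1})R$ and $(x^2,xy,y^{n+1},z)R$ obtainable from explicit matrix identities do suffice, as your fallback remark anticipates --- but the write-up must drop the claimed equalities for that family and settle for the containment. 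Two further small points: your use of $\tfrac12(\alpha^{+}+\alpha^{-})$ to produce $z$ is fine but avoidable (the paper uses idempotent matrices, needing no division by $2$; the hypothesis on the characteristic is only needed for the classification), and for $\cok\alpha$ the generators to realize are $x^2,y,z$, not $x^2,xy$.
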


\begin{proof}
(1) All the nonisomorphic nonfree indecomposable maximal Cohen--Macaulay $R$-modules are $R/xR$, $R/xyR$, $R/yR$, $R/x^2R$, $\cok\alpha_n$, $\cok\beta_n$, $\cok\gamma_n$, and $\cok\delta_n$ for positive integers $n$, where 
$\alpha_n=
\begin{pmatrix}
   xy & y^n \\
   0 & -x
\end{pmatrix}$,
$\beta_n=
\begin{pmatrix}
   x & y^n \\
   0 & -xy
\end{pmatrix}$,
$\gamma_n=
\begin{pmatrix}
   x & y^n \\
   0 & -x
\end{pmatrix}$,
$\delta_n=
\begin{pmatrix}
   xy & y^{n+1} \\
   0 & -xy
\end{pmatrix}$
by \cite[Proposition 4.2]{BGS}.
The ring $R$ is Gorenstein, and $(\cdots \xrightarrow{xy} R \xrightarrow{x} R \xrightarrow{xy} \cdots)$ and $(\cdots \xrightarrow{x^2} R \xrightarrow{y} R \xrightarrow{x^2} \cdots)$ are exact.
By Lemma \ref{lemma AACE example2}(1), we get $\Ann(R/xR)=\Ann(R/xyR)=xR$ and $\Ann(R/yR)=\Ann(R/x^2R)=(x^2,y)R$.
Therefore, we have $\Ann(R/xR\oplus R/y R)=\Ann(R/xR)\cap\Ann(R/yR)=xR\cap(x^2,y)R=(x^2,xy)R$.
Fix $n\ge 1$.
There are exact sequences $(\cdots \xrightarrow{\beta_n} R^2 \xrightarrow{\alpha_n} R^2 \xrightarrow{\beta_n} \cdots)$ and $(\cdots \xrightarrow{\delta_n} R^2 \xrightarrow{\gamma_n} R^2 \xrightarrow{\delta_n} \cdots)$ of free $R$-modules.
It follows from Lemma \ref{lemma AACE example2}(2) that $\Ann(\cok \alpha_n)\subseteq (x, y^n)R$ and $\Ann(\cok \gamma_n)\subseteq (x, y^{n+1})R$.
On the other hand, there are equalities
\begin{align*}
\alpha_n
\begin{pmatrix}
   0 & 0 \\
   0 & -1
\end{pmatrix}
+
\begin{pmatrix}
   1 & 0 \\
   0 & 0
\end{pmatrix}
\beta_n
=xI_2, \ \
& \alpha_n
\begin{pmatrix}
   0 & 0 \\
   1 & 0
\end{pmatrix}
+
\begin{pmatrix}
   0 & 0 \\
   1 & 0
\end{pmatrix}
\beta_n
=y^n I_2, \\
\gamma_n
\begin{pmatrix}
   0 & 0 \\
   0 & -y
\end{pmatrix}
+
\begin{pmatrix}
   1 & 0 \\
   0 & 0
\end{pmatrix}
\delta_n
=xyI_2, \ \
& \gamma_n
\begin{pmatrix}
   0 & 0 \\
   y & 0
\end{pmatrix}
+
\begin{pmatrix}
   0 & 0 \\
   1 & 0
\end{pmatrix}
\delta_n
=y^{n+1}I_2, \\
 {\rm and} \ \
& \gamma_n
\begin{pmatrix}
   x & 0 \\
   0 & -x
\end{pmatrix}
+
\begin{pmatrix}
   0 & -y^{n-1} \\
   0 & 0
\end{pmatrix}
\delta_n
=x^2 I_2,
\end{align*}
which means that $\Ann(\cok \alpha_n)=\Ann(\cok \beta_n)=(x, y^n)R$ and $(x^2,xy, y^{n+1})R\subseteq\Ann(\cok \gamma_n)=\Ann(\cok \delta_n)\subseteq (x, y^{n+1})R$ by Lemmas \ref{lemma AACE example1}(2) and \ref{lemma AACE example2}(1).
The assertion follows from Lemma \ref{lemma AACE example1}(1).

(2) All the nonisomorphic nonfree indecomposable maximal Cohen--Macaulay $R$-modules are $\cok\alpha^{+}$, $\cok\alpha^{-}$, $\cok\beta^{+}$, $\cok\beta^{-}$, $\cok\gamma_n^{+}$, $\cok\gamma_n^{-}$, $\cok\delta_n^{+}$, and $\cok\delta_n^{-}$ for positive integers $n$, where 
\begin{align*}
\alpha^{+}=
\begin{pmatrix}
   z & y \\
   -x^2 & z
\end{pmatrix}, \
\alpha^{-}=
\begin{pmatrix}
   z & -y \\
   x^2 & z
\end{pmatrix}, \
&\beta^{+}=
\begin{pmatrix}
   z & xy \\
   -x & z
\end{pmatrix}, \
\beta^{-}=
\begin{pmatrix}
   z & -xy \\
   x & z
\end{pmatrix}, \\
\gamma_n^{+}=
\begin{pmatrix}
   z & 0 & xy & 0  \\
   0 & z & y^{n+1} & -x  \\
   -x & 0 & z & 0  \\
   -y^{n+1} & xy & 0 & z
\end{pmatrix}, \
&\gamma_n^{-}=
\begin{pmatrix}
   z & 0 & -xy & 0  \\
   0 & z & -y^{n+1} & x  \\
   x & 0 & z & 0  \\
   y^{n+1} & -xy & 0 & z
\end{pmatrix}, \\
\delta_n^{+}=
\begin{pmatrix}
   z & 0 & xy & 0  \\
   0 & z & y^{n+1} & -xy  \\
   -x & 0 & z & 0  \\
   -y^n & x & 0 & z
\end{pmatrix}, \
& {\rm and} \ \delta_n^{-}=
\begin{pmatrix}
   z & 0 & -xy & 0  \\
   0 & z & -y^{n+1} & xy  \\
   x & 0 & z & 0  \\
   y^n & -x & 0 & z
\end{pmatrix};
\end{align*}
see \cite[Proposition 14.19]{LW} (or \cite[Theorem 5.7]{BD}).
For any integer $n\ge 1$, there are exact sequences 
\begin{align*}
(\cdots \xrightarrow{\alpha^{+}} R^2 \xrightarrow{\alpha^{-}} R^2 \xrightarrow{\alpha^{+}} R^2 \xrightarrow{\alpha^{-}} \cdots),\ 
&(\cdots \xrightarrow{\beta^{+}} R^2 \xrightarrow{\beta^{-}} R^2 \xrightarrow{\beta^{+}} R^2 \xrightarrow{\beta^{-}} \cdots), \\
(\cdots \xrightarrow{\gamma_n^{+}} R^4 \xrightarrow{\gamma_n^{-}} R^4 \xrightarrow{\gamma_n^{+}} R^4 \xrightarrow{\gamma_n^{-}} \cdots), \ {\rm and} \
&(\cdots \xrightarrow{\delta_n^{+}} R^4 \xrightarrow{\delta_n^{-}} R^4 \xrightarrow{\delta_n^{+}} R^4 \xrightarrow{\delta_n^{-}} \cdots)
\end{align*}
of free $R$-modules.
Since $R$ is Gorenstein, it follows from Lemmas \ref{lemma AACE example1}(2) and \ref{lemma AACE example2}(2) that
\begin{align*}
\Ann(\cok \alpha^{+})=\Ann(\cok \alpha^{-})\subseteq(x^2, y, z)R, \ 
&\Ann(\cok \beta^{+})=\Ann(\cok \beta^{-})\subseteq(x, z)R, \\
\Ann(\cok \gamma_n^{+})=\Ann(\cok \gamma_n^{-})\subseteq(x, y^{n+1}, z)R, \ {\rm and} \ 
&\Ann(\cok \delta_n^{+})=\Ann(\cok \delta_n^{-})\subseteq(x, y^{n+1}, z)R.
\end{align*}
On the other hand, there are equalities
\begin{align*}
& \alpha^{+}
\begin{pmatrix}
   0 & -1 \\
   0 & 0
\end{pmatrix}
+
\begin{pmatrix}
   0 & 1 \\
   0 & 0
\end{pmatrix}
\alpha^{-}
=x^2 I_2, \ \
\alpha^{+}
\begin{pmatrix}
   0 & 0 \\
   1 & 0
\end{pmatrix}
+
\begin{pmatrix}
   0 & 0 \\
   -1 & 0
\end{pmatrix}
\alpha^{-}
=y I_2, \\ 
& \alpha^{+}
\begin{pmatrix}
   1 & 0 \\
   0 & 0
\end{pmatrix}
+
\begin{pmatrix}
   0 & 0 \\
   0 & 1
\end{pmatrix}
\alpha^{-}
=zI_2, \ \
\beta^{+}
\begin{pmatrix}
   0 & -1 \\
   0 & 0
\end{pmatrix}
+
\begin{pmatrix}
   0 & 1 \\
   0 & 0
\end{pmatrix}
\beta^{-}
=x I_2, \\ 
& \beta^{+}
\begin{pmatrix}
   1 & 0 \\
   0 & 0
\end{pmatrix}
+
\begin{pmatrix}
   0 & 0 \\
   0 & 1
\end{pmatrix}
\beta^{-}
=zI_2, \ \
\gamma_n^{+}
\begin{pmatrix}
   0 & 0 & -1 & 0  \\
   0 & 0 & 0 & 0  \\
   0 & 0 & 0 & 0  \\
   0 & -1 & 0 & 0
\end{pmatrix}
+
\begin{pmatrix}
   0 & 0 & 1 & 0  \\
   0 & 0 & 0 & 0  \\
   0 & 0 & 0 & 0  \\
   0 & 1 & 0 & 0
\end{pmatrix}
\gamma_n^{-}
=xI_4, \\ 
& \gamma_n^{+}
\begin{pmatrix}
   0 & 0 & 0 & -1  \\
   0 & 0 & 0 & 0  \\
   0 & 1 & 0 & 0  \\
   0 & 0 & 0 & 0
\end{pmatrix}
+
\begin{pmatrix}
   0 & 0 & 0 & 1  \\
   0 & 0 & 0 & 0  \\
   0 & -1 & 0 & 0  \\
   0 & 0 & 0 & 0
\end{pmatrix}
\gamma_n^{-}
=y^{n+1}I_4, \
\gamma_n^{+}
\begin{pmatrix}
   1 & 0 & 0 & 0  \\
   0 & 1 & 0 & 0  \\
   0 & 0 & 0 & 0  \\
   0 & 0 & 0 & 0
\end{pmatrix}
+
\begin{pmatrix}
   0 & 0 & 0 & 0  \\
   0 & 0 & 0 & 0  \\
   0 & 0 & 1 & 0  \\
   0 & 0 & 0 & 1
\end{pmatrix}
\gamma_n^{-}
=zI_4, \\ 
& \delta_n^{+}
\begin{pmatrix}
   0 & 0 & \hspace{-0.6em} -x \hspace{-0.6em} & 0  \\
   0 & 0 & 0 & x  \\
   0 & 0 & 0 & 0  \\
   -y^{n-1} \hspace{-0.8em} & 0 & 0 & 0
\end{pmatrix}
+
\begin{pmatrix}
   0 & 0 & \hspace{-0.2em} x & 0  \\
   0 & 0 & \hspace{-0.2em} 0 & \hspace{-0.7em} -x \hspace{-0.3em}\\
   0 & 0 & \hspace{-0.2em} 0 & 0  \\
   y^{n-1} \hspace{-0.8em} & 0 & \hspace{-0.2em} 0 & 0
\end{pmatrix}
\delta_n^{-}
=x^2 I_4, \ 
\delta_n^{+}
\begin{pmatrix}
   0 & 0 & 0 & 0  \\
   0 & 0 & 0 & y  \\
   1 & 0 & 0 & 0  \\
   0 & 0 & 0 & 0
\end{pmatrix}
+
\begin{pmatrix}
   0 & 0 & 0 & 0  \\
   0 & 0 & 0 & \hspace{-0.7em} -y \hspace{-0.3em} \\
   \hspace{-0.3em} -1\hspace{-0.5em} & 0 & 0 & 0  \\
   0 & 0 & 0 & 0
\end{pmatrix}
\delta_n^{-}
=xyI_4, \\ 
& \delta_n^{+}
\begin{pmatrix}
   0 & \hspace{-0.2em} 0 & \hspace{-0.2em} 0 & \hspace{-0.5em} -y \\
   0 & \hspace{-0.2em} 0 & \hspace{-0.2em} 0 & 0  \\
   0 & \hspace{-0.2em} 1 & \hspace{-0.2em} 0 & 0  \\
   0 & \hspace{-0.2em} 0 & \hspace{-0.2em} 0 & 0
\end{pmatrix}
+
\begin{pmatrix}
   0 & 0 & 0 \hspace{-0.2em} & y  \\
   0 & 0 & 0 \hspace{-0.2em} & 0  \\
   0 & \hspace{-0.5em} -1 \hspace{-0.3em} & 0 \hspace{-0.2em} & 0  \\
   0 & 0 & 0 \hspace{-0.2em} & 0
\end{pmatrix}
\delta_n^{-}
=y^{n+1}I_4, \ {\rm and} \
\delta_n^{+}
\begin{pmatrix}
   1 & \hspace{-0.2em} 0 & \hspace{-0.2em} 0 & \hspace{-0.2em} 0  \\
   0 & \hspace{-0.2em} 1 & \hspace{-0.2em} 0 & \hspace{-0.2em} 0  \\
   0 & \hspace{-0.2em} 0 & \hspace{-0.2em} 0 & \hspace{-0.2em} 0  \\
   0 & \hspace{-0.2em} 0 & \hspace{-0.2em} 0 & \hspace{-0.2em} 0
\end{pmatrix}
+
\begin{pmatrix}
   0 & \hspace{-0.2em} 0 & \hspace{-0.2em} 0 & \hspace{-0.2em} 0  \\
   0 & \hspace{-0.2em} 0 & \hspace{-0.2em} 0 & \hspace{-0.2em} 0  \\
   0 & \hspace{-0.2em} 0 & \hspace{-0.2em} 1 & \hspace{-0.2em} 0  \\
   0 & \hspace{-0.2em} 0 & \hspace{-0.2em} 0 & \hspace{-0.2em} 1
\end{pmatrix}
\delta_n^{-}
=zI_4. 
\end{align*}
We obtain $\Ann(\cok \alpha^{+})=(x^2, y, z)R$, $\Ann(\cok \beta^{+})=(x, z)R$, $\Ann(\cok \gamma_n^{+})=(x, y^{n+1}, z)R$, and $(x^2, xy, y^{n+1}, z)R\subseteq\Ann(\cok \delta_n^{+})\subseteq(x, y^{n+1}, z)R$ by Lemma \ref{lemma AACE example2}(1).
Similar to the proof of (1), Lemma \ref{lemma AACE example1}(1) concludes that the assertion holds.
\end{proof} 

\begin{rem}\label{Strict equality sign}
With the notation of the proof of Proposition \ref{D type is compact}(1) one has the equalities $\Ann(\cok \gamma_n)=\Ann(\cok \delta_n)=(x^2,xy, y^{n+1})R$.
We put $J=\Ann(\cok \gamma_n)=\Ann(\cok \delta_n)$.
Since the quotient module $(x, y^{n+1})R/(x^2,xy, y^{n+1})R$ is isomorphic to $k$, it suffices to show $J\subsetneq (x, y^{n+1})R$, that is, $xR\nsubseteq J$.
If $xR\subseteq J$, there are $a,b,c,d,e,f,g,h\in k \llbracket x, y\rrbracket$ such that 
$$
\gamma_n
\begin{pmatrix}
   a & b \\
   c & d
\end{pmatrix}
+
\begin{pmatrix}
   e & f \\
   g & h
\end{pmatrix}
\delta_n
=xI_2
$$
as an $R$-endomorphism of $R^2$ by Lemma \ref{lemma AACE example2}(1).
Comparing the $(2,2)$ entry of the matrix, we have $(-dx+gy^{n+1}-hxy)-x\in (x^2 y)$.
This means $(d+1)x\in (y)$ and thus $d$ is unit.
Comparing the $(1,2)$ entry, one has $bx+dy^n+ey^{n+1}-fxy \in (x^2 y)$, which deduces $dy^n\in (x, y^{n+1})$, a contradiction.
Similarly, with the notation of the proof of Proposition \ref{D type is compact}(2) one has the equalities $\Ann(\cok \delta_n^{+})=\Ann(\cok \delta_n^{-})=(x^2, xy, y^{n+1}, z)R$.
\end{rem}

The last main theorem of this paper is proved by the same method as in \cite[Section 5]{AACE}.
Note that the assumption in \cite[Subsection 5.1]{AACE} that the characteristic of the residue field is 0 can be replaced with the assumption that it is not 2; see \cite[Propositions 8.15 and 8.18]{LW} for instance.

\begin{proof}[Proof of Theorem \ref{main example countable}]
Put $S=k \llbracket x_0, x_1, \ldots, x_d\rrbracket$, $R=S/(f)$ and $R^\#=S\llbracket z \rrbracket/(f+z^2)$ where $0\ne f\in (x_0, x_1, \ldots, x_d)^2$.
It follows from Propositions \ref{A type is compact} and \ref{D type is compact} that we have only to show that $\A(\lcm(R^\#))$ is compact if $\A(\lcm(R))$ is compact.
Suppose that $\A(\lcm(R))$ is compact.
By \cite[Theorem 2.6]{AACE}, there exists a maximal Cohen--Macaulay $R$-module $M$ such that $\Ann(\lcm(R))=\Ann(M)$, which means that $M$ belongs to any closed subset of $\A(\lcm(R))$.
We see by \cite[(the proof of) Proposition 5.4]{AACE} that $\Omega_{R^\#} M$ belongs to any closed subset of $\A(\lcm(R^\#))$.
Therefore, we have $\Ann(\lcm(R^\#))=\Ann(\Omega_{R^\#} M)$.
Again, by \cite[Theorem 2.6]{AACE}, we conclude that $\A(\lcm(R^\#))$ is compact.
\end{proof}

\begin{ac}
The author would like to thank his supervisor Ryo Takahashi for valuable comments.
\end{ac}

\end{document}